\documentclass[reqno]{amsart}

\usepackage{amsfonts}
\usepackage{amssymb}
\usepackage{amsmath}
\usepackage{mathrsfs}
\usepackage{amsthm}
\usepackage[colorlinks,linkcolor=blue,citecolor=blue]{hyperref}

 \newtheorem{theorem}{Theorem}[section]
\newtheorem{corollary}[theorem]{Corollary}
\newtheorem{lemma}[theorem]{Lemma}
\newtheorem{proposition}[theorem]{Proposition}

\theoremstyle{remark}
\newtheorem{remark}[theorem]{Remark}
%%%%

%\newtheorem*{Thm}{Theorem A}
%%%%

\theoremstyle{definition}
\newtheorem{definition}[theorem]{Definition}

\numberwithin{equation}{section}

\begin{document}

\title[Hessian type fully nonlinear elliptic equations]{On the exterior Dirichlet problem for Hessian type fully nonlinear elliptic equations}

\author[X.L. Li]{Xiaoliang Li}
\address[X.L. Li]{School of Mathematical Sciences\\
Beijing Normal University\\
	100875 Beijing\\
	P.R. China}
\email{xiaoliangli@mail.bnu.edu.cn}

\author[C. Wang]{Cong Wang}
\address[C. Wang]{School of Mathematical Sciences\\
Beijing Normal University\\
	100875 Beijing\\
	P.R. China}
\email{cwang@mail.bnu.edu.cn}

\thanks{}
%----------classification, keywords, date
\subjclass[2010]{35J60, 35J25, 35D40, 35B40}

\keywords{Fully nonlinear elliptic equations, exterior Dirichlet problem, prescribed asymptotic behavior, Perron's method}

\begin{abstract}
We treat the exterior Dirichlet problem for a class of fully nonlinear elliptic equations of the form $$f(\lambda(D^2u))=g(x),$$
with prescribed asymptotic behavior at infinity. The equations of this type had been studied extensively by Caffarelli--Nirenberg--Spruck \cite{Caffarelli1985}, Trudinger \cite{Trudinger1995} and many others, and there had been significant discussions on the solvability of the classical Dirichlet problem via the continuity method, under the assumption that $f$ is a concave function. In this paper, based on the Perron's method, we establish an exterior existence and uniqueness result for viscosity solutions of the equations, by assuming $f$ to satisfy certain structure conditions as in \cite{Caffarelli1985,Trudinger1995} but without requiring the concavity of $f$. The equations in our setting may embrace the well-known Monge--Amp\`ere equations, Hessian equations and Hessian quotient equations as special cases.
\end{abstract}

\maketitle

\section{Introduction}
\subsection{The setting of the equations}
Given a bounded domain $D$ in $\mathbb{R}^n$ with $n\geq 3$, we consider in this paper the Dirichlet problem for fully nonlinear, second-order partial differential equations of the form
\begin{equation}\label{eq:pro-eq}
f(\lambda(D^2u))=g(x)
\end{equation}
in the exterior domain $\mathbb{R}^n\setminus\overline{D}$, where $\lambda(D^2 u)=(\lambda_1,\cdots,\lambda_n)$ denotes the eigenvalue vector of the Hessian matrix $D^2u$, $f$ is a smooth symmetric function defined in an open convex symmetric cone $\Gamma\subset\mathbb{R}^n$, with vertex at the origin, such that
$$\Gamma^+:=\{\lambda\in\mathbb{R}^n: \lambda_i> 0, i=1,\cdots,n\}\subset\Gamma,$$
and $g$ is a positive function.

General equations of type \eqref{eq:pro-eq} were first treated by Caffarelli, Nirenberg and Spruck \cite{Caffarelli1985}, who proved the solvability of the classical Dirichlet problem
\begin{equation}\label{eq:Diri-interior}
\begin{cases}
f(\lambda(D^2u))=g(x) & \text{in }{D},\\
u=\varphi & \text{on }\partial D,
\end{cases}
\end{equation}
under various assumptions on the structure of the function $f$ as well as a geometric condition for $\partial D$. A typical example of $f$ embraced in \cite{Caffarelli1985} is $\sigma_k^{1/k}$ with $\Gamma=\Gamma_k$, where $\sigma_k$ is the $k$-th elementary symmetric function
\begin{equation}\label{eq:k-sigma}
\sigma_k(\lambda)=\sum_{1\leq i_1<\cdots<i_k\leq n}\lambda_{i_1}\cdots\lambda_{i_k},\quad k\in\{1,\cdots, n\}
\end{equation}
and $\Gamma_k$ is the Garding cone $$\Gamma_k=\{\lambda\in\mathbb{R}^n: \sigma_j(\lambda)>0\text{ for } j=1,\cdots, k\}.$$
In particular, when $k=n$, function \eqref{eq:k-sigma} corresponds to the famous Monge--Amp\`ere operator $$\sigma_n(\lambda(D^2u))=\det(D^2u).$$
Trudinger \cite{Trudinger1995} then extended the existence results in \cite{Caffarelli1985} to problem \eqref{eq:Diri-interior} with more general $f$ which allows the important examples of quotients of functions \eqref{eq:k-sigma}, the ones that were generally excluded in \cite{Caffarelli1985}, given by
\begin{equation}\label{eq:quotient}
\left(\frac{\sigma_k}{\sigma_l}\right)^{\frac{1}{k-l}},\quad 1\leq l<k\leq n.
\end{equation}

%are satisfied by specific functions $\sigma_k^{1/k}$ and \eqref{eq:quotient}
Specifically, the fundamental structure conditions on $f$ in \cite{Caffarelli1985,Trudinger1995} include
\begin{equation}\label{eq:increase-f}
\frac{\partial f}{\partial \lambda_i}>0 \quad \text{in }\Gamma, \quad i=1,\cdots,n,
\end{equation}
\begin{equation}\label{eq:concave-f}
f\text{ is a concave function in }\Gamma,
\end{equation}
and
\begin{equation}\label{eq:boundary-f}
\limsup_{\lambda\to\lambda_0} f(\lambda)<\inf_{\Omega} g\footnote{Here $\Omega$ is the domain where the function $g$ is defined.} \quad\text{for every }\lambda_0\in\partial\Gamma.
\end{equation}
In the study of fully nonlinear equations associated with form \eqref{eq:pro-eq}, conditions \eqref{eq:increase-f}--\eqref{eq:boundary-f} have become a standard setting for the function $f$ since the pioneer work \cite{Caffarelli1985}. Over the past few decades, besides \cite{Trudinger1995}, many significant contributions have been made to a priori estimates and the existence of solutions to problem \eqref{eq:Diri-interior} under conditions \eqref{eq:increase-f}--\eqref{eq:boundary-f} (possibly along with other more technical assumptions such as \eqref{eq:CNS-f} and \eqref{eq:T-f} below). These studies further extend the existence results in \cite{Caffarelli1985} from different directions. We refer to \cite{Guan1994,Guan1999,Guan2014,GJ2015,ITW2004,YYL1990,Urbas2002} and the references therein for the various extensions to degenerate problems, general domains, inhomogeneous terms $g=g(x,u,Du)$, and Riemannian manifolds.

However, as far as we know, the exterior counterpart of problem \eqref{eq:Diri-interior} with the general $f$ and $g$ has not been studied yet.

\subsection{The exterior Dirichlet problem}
The aim of this paper is to solve the exterior Dirichlet problem for equation \eqref{eq:pro-eq} in a general setting of the functions $f$ and $g$. Differently from interior problem \eqref{eq:Diri-interior}, in exterior domains our main concern is the existence and uniqueness of the solutions with prescribed asymptotic behavior at infinity. In this regard, the investigation for some special cases of $f$ including \eqref{eq:k-sigma}-\eqref{eq:quotient} has recently received increasing attention, which was motivated by Liouville-type results for the corresponding equations in unbounded domains.

As is well-known, a classical theorem due to J\"{o}rgens \cite{Jorgens1954}, Calabi \cite{Calabi1958} and Pogorelov \cite{Pogorelov1972} states that any convex entire solution of the Monge--Amp\`ere equation
\begin{equation}\label{eq:M-A}
\det(D^2u)=1
\end{equation}
(i.e. \eqref{eq:pro-eq} with $f=\sigma_n$ and $g\equiv1$) must be a quadratic polynomial; see also \cite{Caffarelli1995,Cheng-Yau-1986,Jost2001}. Caffarelli and Li \cite{Caffarelli-Li-2003} then extended this rigidity result to the setting of exterior domains. They proved that if $u$ is a convex solution of \eqref{eq:M-A} outside a bounded convex domain of $\mathbb{R}^n$ ($n\geq3$), then there exist a $n\times n$ symmetric positive definite matrix $A$ with $\det (A)=1$, a vector $b\in\mathbb{R}^n$ and a constant $c\in\mathbb{R}$ such that
\begin{equation}
\label{eq:C-Li}
\lim_{|x|\to\infty}\left|u(x)-\left(\frac12x^TAx+b\cdot x+c\right)\right|=0
\end{equation}
with asymptotic order $|x|^{2-n}$.
This result was obtained in \cite{Caffarelli-Li-2003} by showing that asymptotics \eqref{eq:C-Li} actually holds for convex entire solutions of
\begin{equation}\label{eq:M-A-g}
\det(D^2u)=g(x),
\end{equation}
where $g\in C^0(\mathbb{R}^n)$ satisfies $\inf_{\mathbb{R}^n}g>0$ and
\begin{equation}\label{eq:C-Li-g}
\mathrm{support}\,(g-1)\text{ is bounded}.
\end{equation}
Subsequently, Bao, Li and Zhang \cite{Bao-Li-Zhang-2015} derived \eqref{eq:C-Li} for equation \eqref{eq:M-A-g} under a weaker condition than \eqref{eq:C-Li-g}, which is given by
\begin{equation}\label{eq:g}
\limsup_{|x|\to\infty}|x|^\beta|g(x)-1|<\infty
\end{equation}
for some constant $\beta>2$. More generally, the above Liouville properties were also exploited for certain $k$-Hessian equations and Hessian quotient equations (corresponding to \eqref{eq:pro-eq} where $f$ takes \eqref{eq:k-sigma} and \eqref{eq:quotient} respectively) both on the whole space and on exterior domains; see \cite{Bao2003,CY2010,Li-Li-Yuan-2019,LRW2016,SY2021,WB2022,Warren-Yuan2008,Yuan2002} and the references therein. In particular, the authors of \cite{Li-Li-Yuan-2019,Warren-Yuan2008,Yuan2002} mainly studied Liouville-type results for the special Lagrangian equations, corresponding to \eqref{eq:pro-eq} where $f$ is
\begin{equation}\label{eq:Lag}
\frac{1}{\Theta}\sum_{i=1}^n\arctan\lambda_i
\end{equation}
and $g\equiv1$, where $|\Theta|\in (0,\frac{n}{2}\pi)$. Whenever $\Theta\geq\frac{n-1}{2}\pi$, function \eqref{eq:Lag} defined in $\Gamma_n$ is another example of $f$ fulfilling \eqref{eq:increase-f}--\eqref{eq:boundary-f}.

%$f$ is specified by special examples from

Accordingly, ones were naturally led to consider whether the exterior Dirichlet problem for these special equations is well-posed when assigning a quadratic polynomial as the specifying condition at infinity. Indeed, by Perron's method, in \cite{Caffarelli-Li-2003,Bao-Li-Zhang-2015} the authors also established existence and uniqueness theorems for exterior solutions to Monge--Amp\`ere equations \eqref{eq:M-A} and \eqref{eq:M-A-g} with \eqref{eq:g}, in terms of prescribed boundary data and asymptotic behavior \eqref{eq:C-Li}. Their results were later improved by Li and Lu \cite{Li-Lu-2018}, who gave the sharp conditions for the solvability of the problems considered in \cite{Bao-Li-Zhang-2015,Caffarelli-Li-2003}. In the same spirit, for the constant right-hand side, the exterior Dirichlet problem for $k$-Hessian equations, Hessian quotient equations and special Lagrangian equations has also been studied in \cite{Bao-Li-Li-2014,Li-Li-2018,Li2019} in the viscosity sense, under a prescribed quadratic condition at infinity. Moreover, the extension of these studies to a general right-hand side $g$ satisfying \eqref{eq:g} was treated in \cite{Cao-Bao-2017,Jiang-Li-Li-2021-b} recently.

%In terms of prescribed quadratic condition at infinity, the above study available for equation \eqref{eq:pro-eq} with different $f$ of special forms inspires that such an investigation may be expected when one considers a general $f$. The first effort in

Concerning such an investigation for equation \eqref{eq:pro-eq} with general $f$, the first effort is made by Li and Bao \cite{Li-Bao-2014}. Under structural conditions \eqref{eq:increase-f} and \eqref{eq:boundary-f} and the assumption that there is a positive number $a^*$ such that $$f(a^*,\cdots,a^*)=1,$$ they addressed the existence and uniqueness of viscosity solutions to the exterior Dirichlet problem for \eqref{eq:pro-eq} in the case $g\equiv 1$, with prescribed asymptotic behavior \eqref{eq:C-Li} in which the matrix $A$ is restricted to be $a^*I$.\footnote{$I$ is the identity matrix.} Recently, Jiang, Li and Li \cite{Jiang-Li-Li-2021} extended the result in \cite{Li-Bao-2014} to general prescribed quadratic asymptotics, where one can assign more matrices $A$ in \eqref{eq:C-Li} (more precisely, $A\in \mathscr{A}$; see \eqref{eq:A-alpha} below).

\subsection{The main result}
In the present paper, by applying an adapted Perron's method, we shall generalize the existence result in \cite{Jiang-Li-Li-2021} to equation \eqref{eq:pro-eq} with the general right-hand side $g$, under assumption \eqref{eq:g}. For the framework of the function $f$, we are able to skip the requirement of concavity condition \eqref{eq:concave-f} as in \cite{Jiang-Li-Li-2021,Li-Bao-2014}, since here the construction of the solution does not rely on deriving a priori second-order estimates and using the Evans--Krylov theorem where \eqref{eq:concave-f} plays a crucial role (see for instance \cite{Caffarelli1985,Guan2014,GJ2015,Trudinger1995}). We would also like to point out that this generalization is not trivial. Indeed, as an essential tool here we are exploiting, the comparison principle for viscosity solutions to \eqref{eq:pro-eq} whenever $g\not\equiv1$ is not a straightforward adaption of those established when $g\equiv 1$, since the Aleksandrov maximum principle is usually not helpful; see the discussions in \cite{Trudinger1990,Jiang-Li-Li-2021-b}. Moreover, due to the abstract form of $f$ and the variance of $g$, it is a delicate issue to seek appropriate subsolutions and supersolutions of \eqref{eq:pro-eq} for carrying out the Perron process. Especially, we need to present a new technique for the construction of supersolutions in the more general setting \eqref{eq:pro-eq}, since we could neither directly pick quadratic polynomials as the desired supersolutions as adopted in \cite{Bao-Li-Li-2014,Caffarelli-Li-2003,Jiang-Li-Li-2021,Li-Li-2018,Li-Bao-2014,Li2019} for the case $g\equiv1$, nor merely try to obtain such ones in a way parallel to seeking subsolutions as handled in \cite{Bao-Li-Zhang-2015,Cao-Bao-2017,Jiang-Li-Li-2021-b} for those special $f$ from \eqref{eq:k-sigma} and \eqref{eq:quotient}; see Remark \ref{pf-rk:proof} for a detailed explanation.

In order to overcome the above difficulties, besides \eqref{eq:increase-f} and \eqref{eq:boundary-f}, we further assume that $f$ satisfies
\begin{gather}
\sum_{i=1}^n\lambda_i\frac{\partial f}{\partial \lambda_i}\geq\nu(f)\quad\text{in }\Gamma,\label{eq:nu-f}\\
\frac{\partial f}{\partial\lambda_{i_0}}=\max_{1\leq i\leq n}\frac{\partial f}{\partial\lambda_i}\quad\text{in }\Gamma,\label{eq:max-partial-f}
\end{gather}
and for each $\lambda\in\Gamma^+$ there is a number $R$ such that
\begin{equation}\label{eq:sigma-f}
f(\lambda_1,\lambda_2,\cdots,\lambda_n+R)\geq 1.
\end{equation}
Here $\nu:\mathbb{R}\to\mathbb{R}^+$ is a positive increasing function and $i_0\in\{1,\cdots,n\}$ is such that $\lambda_{i_0}=\min_{1\leq i\leq n}\lambda_i$.
The example \eqref{eq:k-sigma} meets the conditions \eqref{eq:nu-f}--\eqref{eq:sigma-f}; actually, conditions \eqref{eq:nu-f}--\eqref{eq:sigma-f} are the variants of those conditions assumed in, for instance, Caffarelli--Nirenberg--Spruck \cite{Caffarelli1985} and Trudinger \cite{Trudinger1990,Trudinger1995} (see Remark \ref{rk:type-extra} below for details). We also point out that conditions \eqref{eq:nu-f}--\eqref{eq:sigma-f} are possible to be removed in our result when $g$ satisfies extra restrictions,
in which case the function $f$ we are considering may also include examples \eqref{eq:quotient} and \eqref{eq:Lag} (see Remark \ref{rk:remove-extra} below for details). Therefore, our result is indeed a generalization of those presented in \cite{Bao-Li-Li-2014,Bao-Li-Zhang-2015,Caffarelli-Li-2003,Cao-Bao-2017,Jiang-Li-Li-2021-b,Li-Li-2018,Li2019}.

To state precisely our main result, we introduce some definitions and notations. First, under hypotheses \eqref{eq:increase-f} and \eqref{eq:boundary-f}, we recall the definition of the viscosity solution to equation \eqref{eq:pro-eq} following \cite{Caffarelli-Cabre-1995,Ishii1992,Urbas1990}.
\begin{definition}\label{def:visc}
Given an open set $\Omega\subset\mathbb{R}^n$, a function $u\in\mathrm{USC}(\Omega)$ $(\mathrm{LSC}(\Omega))$\footnote{$\mathrm{USC}(\Omega)$ and $\mathrm{LSC}(\Omega)$ respectively denote the set of upper and lower semicontinuous real valued functions on $\Omega$.} is said to be a viscosity subsolution (supersolution) of \eqref{eq:pro-eq} in $\Omega$ (or say that $u$ satisfies $f(\lambda(D^2u))\geq(\leq)\,g$ in $\Omega$ in the viscosity sense), if for any admissible function\footnote{A function $\psi$ being of class $C^2$ is called admissible if $\lambda(D^2\psi)\in\Gamma$.} $\psi\in C^2(\Omega)$ and any local maximum (minimum) $x_0$ of $u-\psi$, we have $$f(\lambda(D^2\psi(x_0)))\geq(\leq)\,g(x_0).$$
A function $u\in C^0(\Omega)$ is said to be a viscosity solution of \eqref{eq:pro-eq}, if it is both a viscosity subsolution and a viscosity supersolution of \eqref{eq:pro-eq}.
\end{definition}

In the rest of this paper, we always denote by $\lambda(A)=(\lambda_1,\lambda_2,\cdots,\lambda_n)$ the eigenvalue vector of a real $n\times n$ symmetric matrix $A$ with the ascending order, namely, $\lambda_1\leq\lambda_2\leq\cdots\leq\lambda_n$. We define
$$\mathcal{A}=\{A:A\text{ is a real } n\times n\text{ symmetric positive definite matrix with }f(\lambda(A))=1\}$$
and
\begin{equation}\label{eq:A-alpha}
\mathscr{A}=\{A:A\in\mathcal{A}\text{ with } \alpha(A)>1\},
\end{equation}
where
\begin{equation}\label{eq:alpha}
\alpha(A):=\frac{\lambda(A)\cdot\nabla f(\lambda(A))}{2\lambda_n(A)\frac{\partial f}{\partial\lambda_1}(\lambda(A))}.
\end{equation}
The main result of this paper is the following.
\begin{theorem}\label{thm:main}
Let $D$ be a smooth, bounded, strictly convex domain in $\mathbb{R}^n$ with $n\geq 3$ and let $\varphi\in C^2(\partial D)$. Let $f$ be as in \eqref{eq:pro-eq} and satisfy \eqref{eq:increase-f}, \eqref{eq:boundary-f} and \eqref{eq:nu-f}--\eqref{eq:sigma-f}. Let $g\in C^0(\mathbb{R}^n\setminus D)$ satisfy \eqref{eq:g} and $\inf_{\mathbb{R}^n\setminus D}g>0$.

 For any $A\in\mathscr{A}$ and $b\in\mathbb{R}^n$, there exists a constant $c_*$ depending only on $n$, $D$, $f$, $g$, $A$, $b$ and $\|\varphi\|_{C^2(\partial D)}$, such that for every $c>c_*$ there exists a unique viscosity solution $u\in C^0(\mathbb{R}^n\setminus D)$ to the problem
\begin{equation}\label{eq:pro}
\begin{cases}
f(\lambda(D^2u))=g(x)\quad\text{in } \mathbb{R}^n\setminus\overline{D},\\
u=\varphi\quad\text{on }\partial D,\\
\lim_{|x|\to\infty}\left|u(x)-\left(\frac12x^TAx+b\cdot x+c\right)\right|=0.
\end{cases}
\end{equation}
\end{theorem}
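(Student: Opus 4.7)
The strategy is Perron's method in the unbounded domain $\mathbb{R}^n\setminus\overline{D}$. I would (i) establish a comparison principle adapted to variable $g$; (ii) build a viscosity subsolution $\underline{u}$ of \eqref{eq:pro-eq} that equals $\varphi$ on $\partial D$ and obeys the prescribed quadratic asymptotic behavior; (iii) build a viscosity supersolution $\bar{u}$ with $\bar u \geq \underline u$ that attains the same boundary and asymptotic data; (iv) then take the Perron envelope
$u(x):=\sup\{v(x):v\text{ is a viscosity subsolution of }f(\lambda(D^2v))=g\text{ with }\underline u\leq v\leq\bar u\}$
and verify, by the barriers at $\partial D$ and the squeezing at infinity, that $u$ is a continuous viscosity solution with the required boundary values and asymptotic expansion.

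\textbf{Subsolution.} Since $A\in\mathscr{A}$ already gives $f(\lambda(A))=1$ and $\alpha(A)>1$ (the push-through condition from \cite{Jiang-Li-Li-2021}), I would look for $\underline u$ in the form $\underline u(x)=\tfrac12 x^TAx+b\cdot x+c+w(x)$, where $w$ is a small bounded correction tending to $0$ as $|x|\to\infty$. A natural candidate for $w$ is a radial term $-M(|x|^2+R_0^2)^{-\mu}$ with $\mu=(\beta-2)/2$, glued (outside a large ball) to a smooth extension of $\varphi-(\tfrac12 x^TAx+b\cdot x+c)|_{\partial D}$ near $\partial D$. A Taylor expansion of $f$ at $\lambda(A)$ combined with $|g-1|\lesssim |x|^{-\beta}$ from \eqref{eq:g} lets me pick $M$ so that $f(\lambda(D^2\underline u))\geq g$ outside some large ball; strict convexity of $D$ and a sufficiently large $c$ then let the quadratic ramp reach $\varphi$ on $\partial D$ from below (this is exactly where $\alpha(A)>1$ and the lower bound $c>c_*$ enter).

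\textbf{Supersolution.} This is the main obstacle, as explained in the introduction: for $g\not\equiv 1$ one cannot use the quadratic $\tfrac12 x^TAx+b\cdot x+c$ itself, and one cannot symmetrically mimic the subsolution recipe because pushing eigenvalues \emph{down} risks leaving $\Gamma$ and lowering $f$ below $g$ in a way specific to the abstract $f$. My plan is to seek a radial correction $\bar u(x)=\tfrac12 x^TAx+b\cdot x+c+H(|x|)$ with $H$ to be determined by an auxiliary ODE on $(R_0,\infty)$. Because $H'(r)/r$ and $H''(r)$ perturb the eigenvalues of $A$ along the radial/tangential decomposition, hypothesis \eqref{eq:max-partial-f} guarantees that controlled decreases in the largest eigenvalue actually reduce $f$ predictably, hypothesis \eqref{eq:sigma-f} guarantees that one can, conversely, add a bounded shift in one direction to make $f\geq g$, and hypothesis \eqref{eq:nu-f} provides the integrability needed to conclude $H(r)\to 0$ as $r\to\infty$ together with the quantitative bound $|H(r)|\lesssim r^{2-\mu}$ for some $\mu>0$. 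Solving this ODE with appropriate initial conditions chosen so that $\bar u\geq\underline u$ on some large sphere (and $\bar u\geq\varphi$ on $\partial D$, for $c>c_*$) yields the desired supersolution.

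\textbf{Uniqueness, Perron, and asymptotics.} Uniqueness follows by applying the comparison principle on $B_R\setminus\overline{D}$ to any two solutions $u_1,u_2$ agreeing on $\partial D$ and both satisfying the asymptotic expansion; sending $R\to\infty$ after absorbing $o(1)$ boundary differences on $\partial B_R$ with the auxiliary term $\varepsilon(|x|^2+R_0^2)^{-\mu}$ kills the difference. For existence, the Perron envelope $u$ is a viscosity solution by the standard Ishii argument (which only needs comparison locally and admissibility of the sup/inf of sub/supersolutions), $u=\varphi$ on $\partial D$ follows from $\underline u\leq u\leq\bar u$ and continuity of the barriers there, and the asymptotic condition in \eqref{eq:pro} follows from the same squeeze together with the decay rates of $w$ and $H$. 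The main obstacle, as signaled above, is the construction and fine-tuning of the radial supersolution $\bar u$ that is consistent with the abstract $f$ and the non-constant $g$; once that is done, the remaining Perron machinery is routine.
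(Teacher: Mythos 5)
Your Perron framework (comparison principle, sub/supersolution barriers, Perron envelope, uniqueness from comparison) is the same skeleton as the paper's, and Step 3 is indeed routine once the barriers are in hand. The gaps are in the construction of the barriers, and they are substantial.

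\textbf{First gap: the ansatz is radial in $|x|$, not generalized-symmetric in $s=\tfrac12x^TAx$.} You write both barriers as the quadratic $\tfrac12 x^TAx+b\cdot x+c$ plus a function of $|x|$, and then reason that ``$H'(r)/r$ and $H''(r)$ perturb the eigenvalues of $A$ along the radial/tangential decomposition.'' That is only true when $A=a^*I$. For general $A\in\mathscr{A}$, the eigenvectors of $A$ and the radial/tangential frame of $D^2H$ (or $D^2w$) do not coincide, so the eigenvalues of $A+D^2H$ are not $a_i+\{H'/r\text{ or }H''\}$. The perturbation you need to feed into a Taylor expansion of $f$ is direction-dependent: along $\hat x=e_1$ (the eigenvector of $a_1$, where by \eqref{eq:max-partial-f} the partial $\partial f/\partial\lambda_1$ is largest) the radial contribution lands on $a_1$, while along $\hat x\perp e_1$ it does not. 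Your formula for the first-order change in $f$ therefore depends on $\hat x$, and the sign you need can fail on a cone of directions. The paper avoids this by taking the correction to be a function of $s=\tfrac12 x^TAx$ rather than of $|x|$: then $D^2u=u'A+u''\,(Ax)(Ax)^T$, a rank-one perturbation of $u'A$, whose ordered eigenvalues can be squeezed by Weyl's inequality (Lemmas \ref{sub-lem:D2u}, \ref{sup-lem:D2u}) uniformly over the ellipsoid $\{s=\text{const}\}$. Condition \eqref{eq:max-partial-f} is then invoked to compare $f$ at those eigenvalues to $f$ at a single explicit point (Lemmas \ref{sub-lem:lower-f}, \ref{sup-lem:upper-f}), which is what makes the scalar ODE well-posed.

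\textbf{Second (and larger) gap: a single ODE for $H$ cannot produce a supersolution on all of $\mathbb{R}^n\setminus\overline D$.} The crux, discussed at length in Remark \ref{pf-rk:proof}, is that for a convex correction ($u''\ge 0$) the upper estimate on $f(\lambda(D^2u))$ in Lemma \ref{sup-lem:upper-f} holds only for $s>\bar s$, where $\bar s$ depends on $\eta_2$ and the extra slack parameter $\delta$. This is an \emph{asymptotic} estimate: it uses $u'\to 1$ and $su''\to 0$ to control $\nabla f$ near $\lambda(A)$. Near $\partial D$ it simply fails, so the ODE solution is not a supersolution there; moreover, the domain where it becomes one moves as you tune parameters. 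Consequently the paper has to produce a \emph{second} supersolution $v_{\zeta_1,\zeta_2}$ (Proposition \ref{sup-pro:rad-ode}), radial in $|x|$ with respect to a different scale $\tilde a$, with the opposite concavity $v''<0$, which is an \emph{exact} admissible solution of $f=\inf g$ and hence a true supersolution all the way to $\partial D$ (this step uses \eqref{eq:sigma-f}). The two supersolutions are then spliced in Step 2 by adjusting $\eta_1,\zeta_1,\zeta_2$, a three-parameter matching that the paper emphasizes as the new technical ingredient. Your plan implicitly assumes one ODE-generated $H$ can be both convex near infinity and do the right thing near $\partial D$, and asserts it can be arranged by ``appropriate initial conditions''; this is exactly what fails and what motivates the paper's splicing.

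\textbf{Secondary issues.} (i) Your fixed correction exponent $\mu=(\beta-2)/2$ is not the natural decay; the paper shows via the ODE asymptotics that the error to the quadratic decays as $s^{1-\min\{\alpha,\beta/2\}}$ (with a log at the threshold), i.e. the constant $\alpha=\alpha(A)>1$ from \eqref{eq:alpha} competes with $\beta/2$; this is also the reason the hypothesis $A\in\mathscr A$ (i.e. $\alpha(A)>1$) enters at all. Choosing a fixed $\mu$ and a fixed multiplier $M$ creates a tension you don't resolve: $M$ must be small for the sign of the Taylor term, yet the radial piece must be pushed far below $\varphi$ near $\partial D$ for the gluing with the barrier to take over, and these pulls go opposite ways as $c$ grows. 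The paper's two free parameters $\xi_1,\xi_2$ (Lemmas \ref{sub-lem:PL-w}--\ref{sub-lem:asym-w}) decouple these. (ii) You should also verify, as the paper does in Corollary \ref{sub-pro:subsolution}, that the candidate subsolution is admissible (i.e.\ $\lambda(D^2\underline u)\in\Gamma$), which is not automatic once $A$ is perturbed.

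In short: the Perron/comparison scaffolding and the overall ``subsolution splicing near $\partial D$'' are on target, but the barrier constructions differ substantively from the paper's, and the supersolution part has a genuine hole — you need the generalized-symmetric ansatz in $s$ (not $|x|$), the $\delta$-perturbed ODE, and a second radial supersolution near $\partial D$ to be spliced in.
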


In order to compare Theorem \ref{thm:main} with related results available in literature, let us make some remarks on the assumption of the function $f$ we are considering.

\begin{remark}\label{rk:remove-extra}
Here conditions \eqref{eq:increase-f} and \eqref{eq:boundary-f} are fundamental, which ensure equation \eqref{eq:pro-eq} to be elliptic on admissible functions and its viscosity solutions of class $C^2$ to be admissible, respectively. However, \eqref{eq:nu-f}--\eqref{eq:sigma-f} are more technical and may be removed. Indeed, condition \eqref{eq:nu-f} is required in the comparison principle (see Lemma \ref{pf-thm:comparison}) and could be removed if $g\equiv 1$ (see \cite[Remark A.5]{Jiang-Li-Li-2021-b}). Conditions \eqref{eq:max-partial-f} and \eqref{eq:sigma-f} are used to construct a family of subsolutions and supersolutions to \eqref{eq:pro-eq} in exterior domains, both with quadratic asymptotics at infinity. When $g\geq1$ in $\mathbb{R}^n\setminus\overline{D}$, \eqref{eq:sigma-f} can be removed since in this case we are able to directly let quadratic polynomials $\frac12x^TAx+b\cdot x+c$ be the supersolutions needed in the proof. Based on this, when $g\equiv 1$ particularly, Theorem \ref{thm:main} was proved in \cite{Jiang-Li-Li-2021}. Concerning \eqref{eq:max-partial-f}, it would be removed when we are restricted to the case $A=a^*I$ in \eqref{eq:pro}, where one is allowed to utilize radial functions to seek desired subsolutions as treated in \cite{Li-Bao-2014} (see Remark \ref{sub-rk:radial-f} below for details).
\end{remark}

\begin{remark}\label{rk:type-extra}
Related to the type of conditions \eqref{eq:nu-f}--\eqref{eq:sigma-f}, we make a connection with those required in the study of interior problem \eqref{eq:Diri-interior}. Recall that Caffarelli--Nirenberg--Spruck \cite{Caffarelli1985} (see also \cite{Guan1994,YYL1990}) assumed the following (in addition to \eqref{eq:increase-f}--\eqref{eq:boundary-f}): for every $C>0$ and compact set $K$ in $\Gamma$, there is a number $R=R(C,K)$ such that
\begin{gather}
f(\lambda_1,\cdots,\lambda_{n-1},\lambda_n+R)\geq C\quad\text{for all }\lambda\in K,\label{eq:CNS-f}\\
f(R\lambda)\geq C\quad\text{for all }\lambda\in K.\label{eq:T-f}
\end{gather}
Condition \eqref{eq:nu-f}\footnote{By homogeneity, examples \eqref{eq:k-sigma} and \eqref{eq:quotient} clearly fulfill \eqref{eq:nu-f}, where the function $\nu$ corresponds to numbers $k$ and $1$, respectively, but by which example \eqref{eq:Lag} is excluded.} implies \eqref{eq:T-f}. We mention that \eqref{eq:nu-f} was first introduced by Trudinger \cite{Trudinger1990} in order to treat the Dirichlet problem for the prescribed curvature equations. Regarding \eqref{eq:max-partial-f}, as is well-known, it can be derived from concavity condition \eqref{eq:concave-f}.\footnote{See for instance \cite[Lemma 2.2]{Andrews1994}, from which examples \eqref{eq:k-sigma}, \eqref{eq:quotient} and \eqref{eq:Lag} satisfy \eqref{eq:max-partial-f}.} In addition, \eqref{eq:CNS-f} implies \eqref{eq:sigma-f}.\footnote{Condition \eqref{eq:sigma-f} is clearly satisfied by examples \eqref{eq:k-sigma} and \eqref{eq:Lag} but, in general, excludes example \eqref{eq:quotient}; the same thus holds for \eqref{eq:CNS-f} as well.}
\end{remark}

\begin{remark}\label{rk:A}
The set $\mathscr{A}$ would not be empty in Theorem \ref{thm:main}. Indeed, via conditions \eqref{eq:increase-f} \eqref{eq:boundary-f} and \eqref{eq:nu-f}, for each $\lambda\in\Gamma$, the function $f(t\lambda)$ varies monotonically from $r_0<1$ to $+\infty$ as $t$ goes from $0$ to $+\infty$. So there is $t_1=t_1(\lambda)>0$ such that $f(t_1\lambda)=1$. In particular,  for $\lambda=(1,\cdots,1)$, $t_1=a^*$, as already mentioned. Clearly, $a^*I\in\mathscr{A}$ since $\alpha(a^*I)=\frac{n}{2}>1$. Then, by continuity, $\mathscr{A}$ contains a neighborhood of $a^*I$ in $\mathcal{A}$.
\end{remark}

Now we comment the proof of Theorem \ref{thm:main}. It is based on an adapted Perron's method (Lemma \ref{pf-thm:Perron}). In the spirit of \cite{Bao-Li-Li-2014,Jiang-Li-Li-2021,Li-Li-2018,Li2019}, the crucial point consists in utilizing the so-called generalized symmetric functions specified below to seek out a family of appropriate subsolutions and supersolutions of \eqref{eq:pro-eq}, both with uniformly quadratic asymptotics at infinity. However, the strategy in \cite{Bao-Li-Li-2014,Li-Li-2018,Li2019} heavily relies on the explicit formula of the $k$-Hessian operators acting on generalized symmetric functions, which is unavailable in our case. Instead, here we first adapt the idea in \cite{Jiang-Li-Li-2021} to construct a family of admissible subsolutions $u_{\xi_1,\xi_2}$ and supersolutions $U_{\eta_1,\eta_2,\delta}$ near infinity by solving two second-order implicit ODEs. Then in order to deal with the issue near the boundary, we turn to prepare another subsolution $\underline{w}$, given by the supremum of barrier functions over the boundary points, and also another a family of fine supersolutions $v_{\zeta_1,\zeta_2}$. Finally, by adjusting delicately the parameters $\xi_i$ ($\eta_i$, $\delta$ and $\zeta_i$) with $i=1,2$, we splice $u_{\xi_1,\xi_2}$ ($U_{\eta_1,\eta_2,\delta}$) and $\underline{w}$ ($v_{\zeta_1,\zeta_2}$) well to obtain the desired subsolutions (supersolutions).

Throughout the paper, following \cite{Bao-Li-Li-2014}, we call $u$ a \emph{generalized symmetric function} with respect to a $n\times n$ symmetric matrix $A$ if it is a function of $s=\frac12x^TAx$, $x\in\mathbb{R}^n$, that is $u(x)=u(\frac12x^TAx)$. If $u$ is a subsolution (supersolution) of \eqref{eq:pro-eq} and is also a generalized symmetric function, we say that $u$ is a generalized symmetric subsolution (supersolution) of \eqref{eq:pro-eq}.

\subsection{Organization of the paper}
In Section \ref{sec:sub} and Subsection \ref{sec:sup-1}, we construct a family of generalized symmetric subsolutions and supersolutions of \eqref{eq:pro-eq}, respectively. Both of them are admissible and asymptotically quadratic near infinity. In Subsection \ref{sec:sup-2}, we also construct a family of radial supersolutions of \eqref{eq:pro-eq} with fine properties so that they may be spliced with the previously constructed supersolutions. Section \ref{sec:proof} is devoted to the proof of Theorem \ref{thm:main}.

\section{Generalized symmetric subsolutions}\label{sec:sub}
In this section, we shall work with generalized symmetric functions to seek subsolutions of equation \eqref{eq:pro-eq} with $g\in C^0(\mathbb{R}^n\setminus D)$ satisfying \eqref{eq:g}. In the spirit of \cite{Jiang-Li-Li-2021}, we first compare the values of $f$ at the eigenvalue vectors of generalized symmetric functions and at certain points in the cone $\Gamma$. Then by solving a second-order implicit ODE, we construct a family of admissible subsolutions of \eqref{eq:pro-eq} with uniformly quadratic asymptotics at infinity; see Proposition \ref{sub-pro:ode} and Corollary \ref{sub-pro:subsolution} below.

Throughout the section, we let
\begin{equation}\label{sub-eq:diag-A}
A =\mathrm{diag}(a_1, a_2,\cdots, a_n)\in\mathscr{A}\text{ with }a_1\leq a_2\leq\cdots\leq a_n
\end{equation}
and let $u=u(s)$ be a generalized symmetric function with respect to $A$ and of class $C^2$, where $s=\frac12x^TAx=\frac12\sum_{i=1}^na_ix_i^2$, $x\in\mathbb{R}^n$. For simplicity, we denote $a=\lambda(A)$.

We are trying to estimate $f(\lambda(D^2u))$ from below; see Lemma \ref{sub-lem:lower-f}. For this purpose, let us start with the estimate of $\lambda(D^2u)$. Since
\begin{equation}\label{sub-eq:D2u}
\frac{\partial^2u}{\partial x_i\partial x_j}=a_i\delta_{ij}u'+a_ia_jx_ix_ju''\end{equation}
where $u':=\frac{du}{ds}$ and $u'':=\frac{d^2u}{ds^2}$, one can easily see that if $a_1=a_2=\cdots=a_n=a^*$, then the eigenvalues of $D^2u$ are
\begin{equation}\label{sub-eq:D2u-a}
a^*u'+2a^*su'',a^*u', \cdots,a^*u'.
\end{equation}
However, such a precise representation of $\lambda(D^2u)$ is not available for general $A$. Nevertheless, we can exploit the following inequality.
\begin{lemma}\label{sub-lem:D2u}
Assume $u'(s)>0$ and $u''(s)\leq 0$. Then
\begin{equation}\label{sub-eq:esti-D2u-1}
a_iu'(s)+\sum_{j=1}^na_j^2x_j^2u''(s)\leq \lambda_i(D^2u(x))\leq a_iu'(s),\quad\forall\, 1\leq i\leq n.
\end{equation}
%Moreover,
%\begin{equation}\label{sub-eq:esti-D2u-2}
%\lambda_i(D^2u(x))\geq a_{i-1}u'(s),\quad\forall\, 2\leq i\leq n.
%\end{equation}
\end{lemma}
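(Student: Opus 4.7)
The plan is to recognize the Hessian formula \eqref{sub-eq:D2u} as a sum of a scaled copy of $A$ and a rank-one correction, then exploit Weyl's eigenvalue inequalities. Concretely, writing $Ax=(a_1x_1,\ldots,a_nx_n)^T$, formula \eqref{sub-eq:D2u} reads
\begin{equation*}
D^2u(x)=u'(s)A+u''(s)\,(Ax)(Ax)^T,
\end{equation*}
which expresses $D^2u$ as a symmetric matrix $B:=u'(s)A$ plus a symmetric matrix $C:=u''(s)(Ax)(Ax)^T$ of rank at most one.

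Next I would read off the spectra of the two summands. Since $u'(s)>0$, the eigenvalues of $B$ are simply $u'(s)a_i$, still in ascending order, so $\lambda_i(B)=u'(s)a_i$. The matrix $(Ax)(Ax)^T$ has one eigenvalue $|Ax|^2=\sum_{j=1}^n a_j^2x_j^2$ and $n-1$ zero eigenvalues; combined with $u''(s)\leq 0$, the matrix $C$ is negative semidefinite with
\begin{equation*}
\lambda_{\min}(C)=u''(s)\sum_{j=1}^{n}a_j^2x_j^2,\qquad \lambda_{\max}(C)=0.
\end{equation*}

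Finally I would apply Weyl's inequalities for symmetric matrices: for every $i$,
\begin{equation*}
\lambda_i(B)+\lambda_{\min}(C)\leq \lambda_i(B+C)\leq \lambda_i(B)+\lambda_{\max}(C).
\end{equation*}
Substituting the values computed above yields the claimed two-sided bound \eqref{sub-eq:esti-D2u-1}. No step here is an obstacle; the only thing to watch is that $A$ is given in diagonal form with eigenvalues already in ascending order, so $\lambda_i(u'A)=u'a_i$ with the same indexing used in \eqref{sub-eq:esti-D2u-1}, and the sign hypothesis $u''\leq 0$ is exactly what makes the rank-one perturbation negative semidefinite, which in turn is what makes Weyl sharp at the right endpoint.
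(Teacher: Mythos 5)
Your proof is correct and is essentially the paper's own argument: the paper decomposes $D^2u$ as $A_1+A_2$ with $A_1=u'A$ and $A_2$ the rank-one matrix with entries $a_ia_jx_ix_ju''$ (which is precisely your $u''(s)(Ax)(Ax)^T$), and then applies Weyl's inequalities with the indices $j=0$ and $j=1$, which reduce to exactly the two-sided bound $\lambda_i(B)+\lambda_{\min}(C)\le\lambda_i(B+C)\le\lambda_i(B)+\lambda_{\max}(C)$ that you invoke.
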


\begin{proof}
Inequality (2.4) was obtained in \cite[Lemma 1]{Jiang-Li-Li-2021}. We present the proof here for completeness.

Actually, \eqref{sub-eq:esti-D2u-1} was showed in \cite{Jiang-Li-Li-2021} by virtue of the following Wely theorem (see for instance \cite[Theorem 4.3.1]{Horn1985}):
\begin{theorem}[Weyl]\label{sub-thm:Weyl}
Let $A_1$ and $A_2$ be real $n\times n$ symmetric matrices. Then for each $1\leq i\leq n$,
\begin{equation}\label{sub-eq:Wue}
\lambda_i(A_1+A_2)\leq\lambda_{i+j}(A_1)+\lambda_{n-j}(A_2), \quad\forall\, 0\leq j\leq n-i,
\end{equation}
and
\begin{equation}\label{sub-eq:Wle}
\lambda_i(A_1+A_2)\geq\lambda_{i-j+1}(A_1)+\lambda_j(A_2),\quad\forall\, 1\leq j\leq i.
\end{equation}
\end{theorem}
To obtain \eqref{sub-eq:esti-D2u-1}, by \eqref{sub-eq:D2u} we write $D^2u=A_1+A_2$ where $A_1=u'A$ and $A_2$ is the symmetric matrix whose elements are $a_ia_jx_ix_ju''$.  Then $\lambda(A_1)=u'a$ and
$$\lambda(A_2)=\left(\sum_{j=1}^na_j^2x_j^2u'',0,\cdots,0\right).$$
Thus, \eqref{sub-eq:esti-D2u-1} follows by taking $j=0$ in \eqref{sub-eq:Wue} and $j=1$ in \eqref{sub-eq:Wle}.
\end{proof}

Lemma \ref{sub-lem:D2u} leads us to the following property.
\begin{lemma}\label{sub-lem:lower-f}
Let $f$ be as in \eqref{eq:pro-eq} and assume that \eqref{eq:increase-f} and \eqref{eq:max-partial-f} hold. If $u'(s)>0$ and $u''(s)\leq0$, then
\begin{equation}\label{sub-eq:lower-f}
f(\lambda(D^2u(x)))\geq f(a_1u'(s)+2a_nsu''(s),a_2u'(s),\cdots,a_nu'(s)),
\end{equation}
provided that $\lambda(D^2u(x))$ and
$(a_1u'(s)+2a_nsu''(s),a_2u'(s),\cdots,a_nu'(s))$ both belong to $\Gamma$.
\end{lemma}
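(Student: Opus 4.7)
The plan is to exploit the coordinate-wise comparison between $\mu := \lambda(D^2u(x))$ (in ascending order) and the target vector $\nu := (a_1 u'(s) + 2 a_n s u''(s), a_2 u'(s), \ldots, a_n u'(s))$, and then trade off the opposing signs via a linear interpolation, using the rearrangement-type hypothesis \eqref{eq:max-partial-f}.

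First, from Lemma \ref{sub-lem:D2u} I immediately get $\mu_i \leq a_i u'(s) = \nu_i$ for $i \geq 2$, together with
\[
\mu_1 \geq a_1 u'(s) + \sum_{j=1}^n a_j^2 x_j^2 u''(s) \geq a_1 u'(s) + 2 a_n s u''(s) = \nu_1,
\]
where the second inequality uses $u''(s) \leq 0$ and $a_j^2 \leq a_n a_j$ (since $0 < a_j \leq a_n$) to replace $\sum_j a_j^2 x_j^2$ by the smaller quantity $a_n \sum_j a_j x_j^2 = 2 a_n s$. The very same algebra on traces yields
\[
\mathrm{tr}(\mu) - \mathrm{tr}(\nu) = u''(s) \sum_{j=1}^n a_j (a_j - a_n) x_j^2 \geq 0,
\]
and $\nu$ is automatically in ascending order since $\nu_1 \leq a_1 u'(s) \leq a_i u'(s) = \nu_i$ for $i \geq 2$.

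Next I would interpolate linearly between these two vectors: set $\gamma(t) := (1-t)\nu + t \mu$ for $t \in [0,1]$, which lies in the convex cone $\Gamma$ by hypothesis, and write $f(\mu) - f(\nu) = \int_0^1 \nabla f(\gamma(t)) \cdot (\mu - \nu)\,dt$. Since both $\mu$ and $\nu$ are ascending, so is $\gamma(t)$, with smallest coordinate $\gamma(t)_1$; hypothesis \eqref{eq:max-partial-f} then gives $\frac{\partial f}{\partial \lambda_1}(\gamma(t)) = \max_{1 \leq i \leq n} \frac{\partial f}{\partial \lambda_i}(\gamma(t))$. Combined with $\mu_i - \nu_i \leq 0$ for $i \geq 2$, this delivers the key pointwise bound
\[
\frac{\partial f}{\partial \lambda_i}(\gamma(t))(\mu_i - \nu_i) \geq \frac{\partial f}{\partial \lambda_1}(\gamma(t))(\mu_i - \nu_i), \quad i \geq 2.
\]
Summing over $i$ and using $\mu_1 - \nu_1 \geq 0$ collapses the integrand to $\frac{\partial f}{\partial \lambda_1}(\gamma(t))(\mathrm{tr}(\mu) - \mathrm{tr}(\nu)) \geq 0$, the nonnegativity following from the trace comparison above together with \eqref{eq:increase-f}. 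Integrating then yields \eqref{sub-eq:lower-f}.

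The delicate point is precisely this sign bookkeeping: \eqref{eq:increase-f} alone would push $f$ in the wrong direction against the $n-1$ losses $\mu_i \leq \nu_i$ for $i \geq 2$, and \eqref{eq:max-partial-f} is exactly the ingredient that lets one absorb all those losses into the single gain at the first coordinate via the trace identity. I expect no other serious obstacle, provided one checks carefully that $\gamma(t) \in \Gamma$ throughout — which is immediate from the convexity of $\Gamma$ — so that $\nabla f(\gamma(t))$ is defined along the whole segment.
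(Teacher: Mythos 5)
Your proof is correct and rests on the same core mechanism as the paper's: the coordinate-wise bounds from Lemma \ref{sub-lem:D2u}, interpolation along a segment in $\Gamma$, and condition \eqref{eq:max-partial-f} to absorb the losses in coordinates $i\geq 2$ into the gain in the first coordinate. The only organizational difference is that the paper proceeds in two steps — applying the mean value theorem between $\bar{a}=(a_1u'+\sum_j a_j^2x_j^2u'',a_2u',\ldots,a_nu')$ and $\lambda(D^2u)$, where the difference has trace exactly zero, and then invoking monotonicity \eqref{eq:increase-f} separately to pass from $\bar a$ down to the target — whereas you interpolate directly from the target vector $\nu$ to $\mu=\lambda(D^2u)$, allowing the trace of $\mu-\nu$ to be merely nonnegative (your identity $\mathrm{tr}(\mu)-\mathrm{tr}(\nu)=u''\sum_j a_j(a_j-a_n)x_j^2\geq 0$ is correct) and thereby folding the monotonicity step into the single integral estimate. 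Both arguments are valid; yours is marginally more streamlined, and you were right to check explicitly that $\gamma(t)$ stays ascending so that \eqref{eq:max-partial-f} applies along the entire segment.
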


\begin{proof}
By \eqref{sub-eq:esti-D2u-1}, we could write
\begin{equation}\label{sub-eq:lambda-theta}
\lambda_i(D^2u)=a_iu'+\theta_i\sum_{j=1}^na_j^2x_j^2u'',\quad\forall\,1\leq i\leq n,\end{equation}
where $\theta_i=\theta_i(s)\in(0,1)$ satisfying $\sum_{i=1}^n\theta_i=1$.
Set
$$\bar{a}=(a_1u'+\sum_{j=1}^na_j^2x_j^2u'',a_2u',\cdots,a_nu').$$
Since $u''\leq 0$, it is clear that $\bar{a}\in\Gamma$ when $(a_1u'+2a_nsu'',a_2u',\cdots,a_nu')\in\Gamma$.
By the mean value theorem, we have
\begin{equation}\label{sub-eq:mv-lambda-a}
f(\lambda(D^2u))-f(\bar{a})=(\theta_1-1)Uu''\frac{\partial f}{\partial\lambda_1}(\tilde{a})+Uu''\sum_{i=2}^n\theta_i\frac{\partial f}{\partial\lambda_i}(\tilde{a}),
\end{equation}
where $\tilde{a}$ is a point lying in the segment between $\bar{a}$ and $\lambda(D^2u)$, and $U=\sum_{j=1}^na_j^2x_j^2$. Notice from \eqref{eq:max-partial-f} that
\begin{equation*}%\label{sub-eq:max-partial-f-til-a}
\frac{\partial f}{\partial\lambda_1}(\tilde{a})\geq\frac{\partial f}{\partial\lambda_i}(\tilde{a})>0,\quad\forall\,2\leq i\leq n.
\end{equation*}
Thus, we get from \eqref{sub-eq:mv-lambda-a} that
$$f(\lambda(D^2u))\geq f(\bar{a})\geq f(a_1u'+2a_nsu'',a_2u',\cdots,a_nu'),$$
where the second ``$\geq$" is due to assumption \eqref{eq:increase-f}.
\end{proof}

\begin{remark}\label{sub-rk:radial-f}
When $A=a^*I$, it is clear from \eqref{sub-eq:D2u-a} that the equality of \eqref{sub-eq:lower-f} holds without requiring assumption \eqref{eq:max-partial-f}.
\end{remark}

Now, we use generalized symmetric functions to give a control of $g$ in \eqref{eq:pro-eq}. In view of $\inf_{\mathbb{R}^n\setminus D}g>0$ and \eqref{eq:g}, there exist $C_0$ and $s_0>1$ such that for any $s\geq s_0$,
\begin{equation}\label{sub-eq:g}
0<\underline{g}(s):=1-C_0s^{-\frac{\beta}{2}}\leq g(x)\leq\bar{g}(s):=1+C_0s^{-\frac{\beta}{2}}.
\end{equation}

Let $w_0=w_0(s)$ be a positive decreasing function defined on $[s_0,+\infty)$, which is determined by
\begin{equation}\label{sub-eq:f-w0-g}
f(a_1w_0(s),\cdots,a_nw_0(s))=\bar{g}(s).
\end{equation}
This equality is validated by assumptions \eqref{eq:increase-f}, \eqref{eq:boundary-f} and \eqref{eq:nu-f}, as explained in Remark \ref{rk:A}. Moreover, the implicit function theorem implies that $w_0$ is smooth and there hold
\begin{equation}\label{sub-eq:dw0}
w_0(s)=1+O(s^{-\frac{\beta}{2}})\quad\text{and}\quad\frac{dw_0}{ds}=-\frac{C_0\beta s^{-\frac{\beta}{2}-1}}{2a\cdot\nabla f(aw_0)}=O(s^{-\frac{\beta}{2}-1})
\end{equation}
as $s\to+\infty$.

In order to make $u$ be an admissible subsolution of \eqref{eq:pro-eq}, i.e. $f(\lambda(D^2u))\geq g$, Lemma \ref{sub-lem:lower-f} together with \eqref{sub-eq:g} inspires us to consider the following second-order implicit ODE:
\begin{equation}\label{sub-eq:IODE}
f(a_1u'+2a_nsu'',a_2u',\cdots,a_nu')=\bar{g},\quad s>s_0.
\end{equation}
By performing qualitative analysis, we next study the global existence of solutions to \eqref{sub-eq:IODE} and determine their asymptotic behavior at infinity. More precisely, given the initial data
\begin{equation}\label{sub-eq:iode-ic}
u(s_0)=\xi_1\quad\text{and}\quad u'(s_0)=\xi_2,
\end{equation}
we will show

\begin{proposition}\label{sub-pro:ode}
Let $f$ be as in \eqref{eq:pro-eq} and assume that \eqref{eq:increase-f}, \eqref{eq:boundary-f}, \eqref{eq:nu-f} and \eqref{eq:max-partial-f} hold. Let $\xi_1\in\mathbb{R}$ and $\xi_2>w_0(s_0)$. Problem \eqref{sub-eq:IODE}-\eqref{sub-eq:iode-ic} admits a smooth solution $u_{\xi_1,\xi_2}$ defined on $[s_0,+\infty)$, such that $u_{\xi_1,\xi_2}'(s)>1$, $u_{\xi_1,\xi_2}''(s)<0$ for any $s>s_0$ and
\begin{equation*}
u_{\xi_1,\xi_2}(s)=s+\xi_1+\mu(s_0,\xi_2)+
\begin{cases}
O(s^{1-\min\{\alpha,\frac{\beta}{2}\}})&\text{if }\alpha\neq\frac{\beta}{2},\\
O(s^{1-\alpha}\ln s)&\text{if }\alpha=\frac{\beta}{2},
\end{cases}
\end{equation*}
as $s\to+\infty$, where $\alpha=\alpha(A)$ is defined by \eqref{eq:alpha}, $\beta>2$ is as in \eqref{sub-eq:g} and $\mu$ is a function on $[s_0,+\infty)\times(w_0(s_0),+\infty)$ given by \eqref{sub-eq:def-mu}.
\end{proposition}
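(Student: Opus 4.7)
The plan is to recast the implicit ODE \eqref{sub-eq:IODE} as a first-order ODE for $v := u'$ and then carry out a qualitative analysis. For each fixed $s \geq s_0$ and $v > w_0(s)$, the map $T \mapsto f(a_1 v + T, a_2 v, \ldots, a_n v)$ is strictly increasing by \eqref{eq:increase-f}, exceeds $\bar g(s) = f(a\,w_0(s))$ already at $T = 0$ (since $v > w_0(s)$ and $f$ is monotone in each variable), and drops below $\bar g(s)$ as the first argument is driven toward $\partial\Gamma$ thanks to \eqref{eq:boundary-f}. Hence the equation $f(a_1 v + T, a_2 v, \ldots, a_n v) = \bar g(s)$ defines a unique smooth $T = T(s,v) < 0$, and setting $F(s,v) := T(s,v)/(2 a_n s)$ converts \eqref{sub-eq:IODE}--\eqref{sub-eq:iode-ic} into the first-order problem $v' = F(s,v)$, $v(s_0) = \xi_2$, with $F < 0$ throughout $\{v > w_0(s)\}$.

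Global existence and the sign conditions follow from an invariance argument for $\{v > w_0(s)\}$: at any hypothetical first-contact point $s_1$ where $v(s_1) = w_0(s_1)$ one would have $v'(s_1) = 0$ by the defining property of $T$, while $w_0'(s_1) < 0$ by \eqref{sub-eq:dw0}, so $(v - w_0)'(s_1) > 0$ contradicts the fact that $v - w_0$ is supposed to be decreasing to zero at $s_1$. Since $\bar g(s) > 1 = f(a)$ forces $w_0(s) > 1$ (by the monotonicity argument already used for $t_1$ in Remark \ref{rk:A}), we obtain $v(s) > 1$ and $v'(s) < 0$, i.e.\ $u'' < 0$, throughout the maximal interval. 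The solution stays in the compact strip $1 < v \leq \xi_2$, so it extends to $[s_0, +\infty)$.

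For the asymptotic profile, monotonicity and boundedness give $v(s) \to L \geq 1$. If $L > 1$, then passing to the limit in \eqref{sub-eq:IODE} forces $2 a_n s v'(s) \to \tau$ for some $\tau < 0$ determined by $f(a_1 L + \tau, a_2 L, \ldots, a_n L) = 1$, whence $v(s) \sim (\tau/2 a_n)\ln s \to -\infty$, a contradiction. Hence $v \to 1$. Writing $\epsilon := v - 1 > 0$ and $\delta := w_0 - 1 = O(s^{-\beta/2})$ by \eqref{sub-eq:dw0}, and Taylor expanding the equation at $\lambda = a$ with the definition of $\alpha = \alpha(A)$ from \eqref{eq:alpha}, one recasts the ODE as the linear perturbation
\begin{equation*}
s\,\epsilon'(s) + \alpha\,\epsilon(s) = \alpha\,\delta(s) + O(\epsilon(s)^2 + s^2 \epsilon'(s)^2).
\end{equation*}
The integrating factor $s^{\alpha-1}$ converts this to $(s^\alpha \epsilon)' = \alpha s^{\alpha-1}\delta + \text{error}$, and integrating on $[s_0, s]$ and separating the cases $\alpha > \beta/2$, $\alpha < \beta/2$, $\alpha = \beta/2$ (corresponding respectively to divergent, convergent and logarithmic behavior of $\int^s t^{\alpha-1-\beta/2}\,dt$) delivers $\epsilon(s) = O(s^{-\min\{\alpha,\beta/2\}})$ off the critical line and $\epsilon(s) = O(s^{-\alpha}\ln s)$ on it.

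Finally, since $\alpha > 1$ (because $A \in \mathscr{A}$) and $\beta/2 > 1$, the tail $\int_s^\infty \epsilon(t)\,dt$ converges and inherits the same decay rates (with exponent raised by one) as $\epsilon$. Setting $\mu(s_0, \xi_2) := -s_0 + \int_{s_0}^{\infty}\epsilon(t)\,dt$, which depends on $\xi_2$ through the solution $\epsilon$, and integrating $u' = 1 + \epsilon$ from $s_0$ produces
\begin{equation*}
u_{\xi_1,\xi_2}(s) - s - \xi_1 - \mu(s_0, \xi_2) = -\int_{s}^{\infty}\epsilon(t)\,dt,
\end{equation*}
whose right-hand side has exactly the asymptotics claimed. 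The main obstacle I expect is the linearization step: the quadratic remainder $O(\epsilon^2 + s^2 (\epsilon')^2)$ must be controlled by a bootstrap, starting from the crude fact $\epsilon \to 0$ and iteratively refining the estimate until the integrating-factor computation yields the sharp rate in each of the three regimes.
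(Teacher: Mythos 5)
Your plan is essentially the paper's: the implicit ODE is reduced to a first-order problem for $v=u'$ by solving $f(a_1v+T,a_2v,\ldots,a_nv)=\bar g(s)$ for $T$ (the paper writes this as $h(s,v)=a_1v+T$), global existence follows from Picard--Lindel\"of together with the invariance argument at a first contact with $w_0$, and the asymptotics come from an integrating-factor/Gronwall computation in which the crucial coefficient $\partial_wh(s,w_0)-a_1\to -2a_n\alpha$ produces the rate $s^{-\alpha}$. The one place you stop short is exactly the place you flag: controlling the nonlinear remainder so the integrating-factor step yields the sharp rate. A bootstrap ``starting from $\epsilon\to 0$'' alone is delicate because the modulus-of-continuity term has to be shown integrable against $ds/s$ before $F(s)\asymp s^{-\alpha}$ can be extracted; the paper sidesteps a full iteration by a clean two-step Gronwall argument. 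First, comparing $v'$ with the simpler linear ODE $v'\le a_1(w_0-v)/(2a_ns)$ gives the preliminary polynomial rate $v-w_0=O(s^{-a_1/(2a_n)})$; second, feeding this into the decomposition $v'=\tfrac{v-w_0}{2a_ns}\bigl[\partial_wh(s,\theta)-\partial_wh(s,w_0)\bigr]+\tfrac{v-w_0}{2a_ns}\bigl[\partial_wh(s,w_0)-a_1\bigr]$ and using Dini (in fact Lipschitz) continuity of $\partial_wh$ makes the first bracket contribute a convergent integral, so the integrating factor is $\asymp s^{-\alpha}$ and one more Gronwall pass gives $\epsilon=O(s^{-\min\{\alpha,\beta/2\}})$ or $O(s^{-\alpha}\ln s)$ directly, with no further iteration. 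Your observation that $v\to 1$ (ruling out $L>1$ via the logarithmic divergence) is not needed in the paper's route since the crude Gronwall bound already forces $v\to w_0\to 1$; also, the integrating factor you mean is $s^{\alpha}$ rather than $s^{\alpha-1}$, though your displayed identity $(s^{\alpha}\epsilon)'=\alpha s^{\alpha-1}\delta+\text{error}$ is the correct one. With the two-step Gronwall substituted for the bootstrap, your argument matches the paper's.
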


To prove Proposition \ref{sub-pro:ode}, we need several lemmas.

\begin{lemma}\label{sub-lem:h}
Let $s\in [s_0,+\infty)$ and $w\in[w_0(s),+\infty)$. Assume that \eqref{eq:increase-f}, \eqref{eq:boundary-f} and \eqref{eq:nu-f} hold. Then there exists a function $h=h(s,w)$ such that
\begin{equation*}%\label{sub-eq:h-gamma}
(h(s,w),a_2w,\cdots,a_nw)\in\Gamma
\end{equation*}
and
\begin{equation}\label{sub-eq:f-h-g}
f(h(s,w),a_2w,\cdots,a_nw)=\bar{g}(s).
\end{equation}
Moreover, $h(s,w)$ is smooth and decreasing with respect to $s$ and $w$.
\end{lemma}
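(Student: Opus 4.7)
The plan is to realise $h(s,w)$ as the unique zero of the single-variable function
\[
\phi_{s,w}(t):=f(t,a_2w,\ldots,a_nw)-\bar{g}(s)
\]
on the open slice $I_w:=\{t\in\mathbb{R}:(t,a_2w,\ldots,a_nw)\in\Gamma\}$, and then read off smoothness and the signs of $\partial_sh$, $\partial_wh$ by implicit differentiation. Since $aw\in\Gamma^+\subset\Gamma$, the value $t=a_1w$ lies in $I_w$, so $I_w$ is a non-empty open interval.

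For existence I would invoke the intermediate value theorem. By \eqref{eq:increase-f}, $\phi_{s,w}$ is smooth and strictly increasing in $t$. At $t=a_1w$, componentwise monotonicity combined with \eqref{sub-eq:f-w0-g} and $w\geq w_0(s)$ yields
\[
\phi_{s,w}(a_1w)=f(aw)-\bar{g}(s)\geq f(aw_0(s))-\bar{g}(s)=0.
\]
On the other hand, as $t$ decreases to $\inf I_w$ the tuple $(t,a_2w,\ldots,a_nw)$ approaches $\partial\Gamma$, so by \eqref{eq:boundary-f} together with the trivial bound $\bar{g}(s)\geq\inf g$ (which follows from $g\leq\bar g$ on the level set $\tfrac12 x^TAx=s$), one has $\phi_{s,w}(t)<0$ for $t$ close enough to this boundary. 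Strict monotonicity then yields a unique $h(s,w)\in I_w$ with $\phi_{s,w}(h(s,w))=0$, and the tuple $(h(s,w),a_2w,\ldots,a_nw)$ lies in $\Gamma$ by construction.

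The smoothness and the asserted monotonicity are then obtained from the implicit function theorem applied to $F(s,w,t):=\phi_{s,w}(t)$: since $\partial_tF=\partial_1 f>0$ on $\Gamma$ by \eqref{eq:increase-f}, the theorem provides $h\in C^\infty$. Differentiating \eqref{sub-eq:f-h-g} with respect to $s$ and $w$ then yields
\[
\partial_1 f\,\partial_sh=\bar{g}'(s)=-\tfrac{C_0\beta}{2}\,s^{-\beta/2-1}<0,\qquad \partial_1 f\,\partial_wh=-\sum_{i=2}^n a_i\partial_if<0,
\]
where the last inequality uses $a_i>0$ and $\partial_if>0$ for all $i$ by \eqref{eq:increase-f}. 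Hence $\partial_sh<0$ and $\partial_wh<0$, as required. The only mildly delicate point in this plan is the limiting step showing $\phi_{s,w}(t)<0$ near $\partial\Gamma$, which hinges on the structural condition \eqref{eq:boundary-f}; everything else is a routine application of the intermediate value theorem and the implicit function theorem.
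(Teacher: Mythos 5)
Your proof is correct and follows essentially the same route as the paper: one bounds $f(t,a_2w,\dots,a_nw)$ from above via \eqref{eq:boundary-f} as $t$ decreases toward $\partial\Gamma$, from below at $t=a_1w$ via monotonicity and the defining relation \eqref{sub-eq:f-w0-g}, applies the intermediate value theorem on the slice $I_w$ (the paper instead invokes convexity of $\Gamma$ to place the resulting point in $\Gamma$, which amounts to the same observation), and then obtains smoothness and the sign of $\partial_s h$, $\partial_w h$ by the implicit function theorem exactly as you do.
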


\begin{proof}
 Let us first point out the existence of $h$. Since $\Gamma$ is symmetric and convex, one has $\Gamma\subset\Gamma_1$. Thus, by \eqref{eq:boundary-f}, it holds that
$$f(\varepsilon,a_2w,\cdots,a_nw)\leq\inf_{\mathbb{R}^n\setminus D}g\leq \bar{g}$$
when $\varepsilon$ is small. On the other hand, by \eqref{eq:increase-f} we have $$f(a_1w,\cdots,a_nw)\geq f(a_1w_0,\cdots,a_nw_0)=\bar{g}.$$
Then the function $h$ exists via the mean value theorem. Also, the convexity of $\Gamma$ implies $$(h(s,w),a_2w,\cdots,a_nw)\in\Gamma.$$ Furthermore, the implicit function theorem yields that $h$ is smooth and
\begin{gather}
\frac{\partial h}{\partial s}=-\frac{C_0\beta s^{-\frac{\beta}{2}-1}}{2\frac{\partial f}{\partial\lambda_1}(h(s,w),a_2w,\cdots,a_nw)}<0,\notag\\
\frac{\partial h}{\partial w}=-\frac{\sum_{i=2}^na_i\frac{\partial f}{\partial\lambda_i}(h(s,w),a_2w,\cdots,a_nw)}{\frac{\partial f}{\partial\lambda_1}(h(s,w),a_2w,\cdots,a_nw)}<0.\label{sub-eq:partial-w-h}
\end{gather}
\end{proof}

In terms of Lemma \ref{sub-lem:h}, one can observe that if $u'\geq w_0$ for $s>s_0$ in equation \eqref{sub-eq:IODE}, then
\begin{equation*}
a_1u'+2a_nsu''=h(s,u').
\end{equation*}
Recalling \eqref{sub-eq:iode-ic}, this means that such $u'$ solves the initial problem
\begin{equation}\label{sub-eq:ODE-w}
\begin{cases}
\frac{dw}{ds}=\frac{h(s,w)-a_1w}{2a_ns},& s>s_0,\\
w(s_0)=\xi_2.
\end{cases}
\end{equation}
As a result, we may try to solve the first-order ODE \eqref{sub-eq:ODE-w} to acquire a solution of problem \eqref{sub-eq:IODE}-\eqref{sub-eq:iode-ic}. Indeed, by applying the Picard--Lindel\"of theorem we have
\begin{lemma}\label{sub-lem:PL-w}
Let $\xi_2>w_0(s_0)$. Problem \eqref{sub-eq:ODE-w} admits a unique smooth solution $w_{\xi_2}(s)$ defined on $[s_0,+\infty)$. Moreover, for any $s\in [s_0,+\infty)$, $w_{\xi_2}'(s)<0$, $w_{\xi_2}(s)>w_0(s)$ and $w_{\xi_2}(s)$ is strictly increasing with respect to $\xi_2$ such that $$\lim_{\xi_2\to+\infty}w_{\xi_2}(s)=+\infty.$$
\end{lemma}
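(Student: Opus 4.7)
The first step is to invoke the Picard--Lindel\"of theorem. Since Lemma~\ref{sub-lem:h} supplies $h$ as a smooth function on $\{(s,w):s\geq s_0,\,w\geq w_0(s)\}$, the right-hand side $F(s,w):=[h(s,w)-a_1 w]/(2 a_n s)$ of~\eqref{sub-eq:ODE-w} is $C^1$ in a neighborhood of $(s_0,\xi_2)$, and classical ODE theory yields a unique smooth local solution $w_{\xi_2}$ on some maximal interval $[s_0,b)$. It then remains to establish the qualitative properties globally and to push $b$ up to $+\infty$.

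The key observation for the lower barrier $w_{\xi_2}(s)>w_0(s)$ is that, by~\eqref{sub-eq:f-w0-g} combined with the uniqueness of $h$ defined through~\eqref{sub-eq:f-h-g}, one has $h(s,w_0(s))=a_1 w_0(s)$ identically, so $F(s,w_0(s))\equiv 0$; however, $w_0$ is not itself a solution because $w_0'<0$ by~\eqref{sub-eq:dw0}. Setting $\phi:=w_{\xi_2}-w_0$, which satisfies $\phi(s_0)>0$, suppose $\phi$ first vanished at some $s_1\in(s_0,b)$. Then $\phi\geq 0$ on $[s_0,s_1]$ would force $\phi'(s_1)\leq 0$, yet the ODE gives $\phi'(s_1)=F(s_1,w_0(s_1))-w_0'(s_1)=-w_0'(s_1)>0$, a contradiction. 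Hence $w_{\xi_2}>w_0$ on $[s_0,b)$, and the strict monotonicity~\eqref{sub-eq:partial-w-h} of $h$ in $w$ then yields $h(s,w_{\xi_2})<h(s,w_0)=a_1 w_0<a_1 w_{\xi_2}$, whence $w_{\xi_2}'(s)<0$. Since $w_{\xi_2}$ is decreasing and trapped in the compact range $[\inf_{s\geq s_0}w_0,\xi_2]$ on which $F$ is bounded, standard continuation extends $w_{\xi_2}$ smoothly to $[s_0,+\infty)$.

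Strict monotonicity in $\xi_2$ is an immediate consequence of uniqueness: two solutions with distinct initial data cannot meet, so if $\xi_2'>\xi_2$ then $w_{\xi_2'}(s)>w_{\xi_2}(s)$ for every $s\geq s_0$.

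The main obstacle is the blow-up $\lim_{\xi_2\to\infty}w_{\xi_2}(s)=+\infty$. I would exploit the cone inclusion $\Gamma\subseteq\Gamma_1$, which is standard for an open convex symmetric proper cone containing $\Gamma^+$: since $(h(s,w),a_2 w,\ldots,a_n w)\in\Gamma$, summing the coordinates yields $h(s,w)+(a_2+\cdots+a_n)w>0$, so $h(s,w)-a_1 w>-\mathrm{tr}(A)\,w$. Plugging this into the ODE produces the differential inequality
\begin{equation*}
\frac{dw_{\xi_2}}{ds}>-\frac{\mathrm{tr}(A)}{2 a_n}\cdot\frac{w_{\xi_2}(s)}{s},
\end{equation*}
and multiplying through by the integrating factor $s^{\mathrm{tr}(A)/(2 a_n)}$ shows that $s\mapsto s^{\mathrm{tr}(A)/(2 a_n)}\,w_{\xi_2}(s)$ is strictly increasing; therefore $w_{\xi_2}(s)\geq\xi_2\,(s_0/s)^{\mathrm{tr}(A)/(2 a_n)}$, which tends to $+\infty$ as $\xi_2\to\infty$ for each fixed $s$. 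Aside from pinning down this cone-geometric lower bound on $h$, every step of the argument is a routine comparison/continuation exercise.
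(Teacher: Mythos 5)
Your proof is correct, and the core of the argument (local existence by Picard--Lindel\"of, showing $w_{\xi_2}$ cannot touch $w_0$ via the ``$w_{\xi_2}'(s_1^-)=0$ versus $w_0'(s_1)<0$'' contradiction, monotonicity $w_{\xi_2}'<0$ from $h(s,w_{\xi_2})<h(s,w_0)=a_1w_0<a_1w_{\xi_2}$, and continuation by boundedness) coincides with the paper's. The one place where you diverge is the blow-up assertion $\lim_{\xi_2\to+\infty}w_{\xi_2}(s)=+\infty$: the paper omits the proof entirely, citing Lemma~5 of~\cite{Jiang-Li-Li-2021}, whereas you give a self-contained argument. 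Your argument is sound: Lemma~\ref{sub-lem:h} guarantees $(h(s,w),a_2w,\ldots,a_nw)\in\Gamma\subset\Gamma_1$, so $h(s,w)>-(a_2+\cdots+a_n)w$, hence $h(s,w)-a_1w>-\mathrm{tr}(A)\,w$; plugging into \eqref{sub-eq:ODE-w} and multiplying by the integrating factor $s^{\mathrm{tr}(A)/(2a_n)}$ yields $w_{\xi_2}(s)\geq\xi_2\,(s_0/s)^{\mathrm{tr}(A)/(2a_n)}$, which indeed blows up for each fixed $s$. This is a clean and natural way to fill the gap the paper delegates to a reference. One small stylistic point: when you invoke $\phi'(s_1)\leq0$, you should say explicitly that this concerns the left derivative (which suffices, since $\phi$ is $C^1$ and the ODE extends continuously to $w=w_0(s)$); the paper handles this by writing the one-sided difference quotient, which is a bit more careful but says the same thing.
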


\begin{proof}
Since $\frac{h(s,w)-a_1w}{2a_ns}\in C^\infty \left((s_0,+\infty)\times(w_0(s),+\infty)\right)$, the Picard--Lindel\"of theorem implies that problem \eqref{sub-eq:ODE-w} locally admits a unique solution $w_{\xi_2}$ such that $w_{\xi_2}>w_0$. We claim that $w_{\xi_2}$ can be extended to the whole interval $[s_0,+\infty)$. Note that $h(s,w_{\xi_2})<a_1w_{\xi_2}$, which implies that the local solution $w_{\xi_2}$ is decreasing and bounded. Thus, it suffices to show that $w_{\xi_2}$ does not touch $w_0$ at a finite $s_1\in(s_0,+\infty)$. Assume for contradiction that $w_{\xi_2}(s_1)=w_0(s_1)$ and $w_{\xi_2}>w_0$ in $(s_0,s_1)$. Then by \eqref{sub-eq:f-w0-g} and \eqref{sub-eq:f-h-g}, we get $$h(s,w_{\xi_2}(s_1))=h(s,w_0(s_1))=a_1w_0(s_1).$$ This means $w_{\xi_2}'(s_1^-)=0$ via \eqref{sub-eq:ODE-w}. But from \eqref{sub-eq:dw0},
$$\lim_{s\to s_1^{-}}\frac{w_{
\xi_2}(s_1)-w_{\xi_2}(s)}{s_1-s}\leq\lim_{s\to s_1^{-}}\frac{w_0(s_1)-w_0(s)}{s_1-s}=w_0'(s_1)<0.$$
Consequently, we deduce that $w_{\xi_2}$ exists globally and $w_{\xi_2}(s)>w_0(s)$ for $s\in [s_0,+\infty)$.

The proof of the assertion that $\lim_{\xi_2\to+\infty}w_{\xi_2}(s)=+\infty$ is similar to that of Lemma 5 in \cite{Jiang-Li-Li-2021}. We thus omit it.
\end{proof}

Furthermore, we derive the asymptotic behavior of solutions to problem \eqref{sub-eq:ODE-w}.
\begin{lemma}\label{sub-lem:asym-w}
Let $\alpha$ and $\beta$ be as in Proposition \ref{sub-pro:ode}. Let $w_{\xi_2}$ be the solution of \eqref{sub-eq:ODE-w} given in Lemma \ref{sub-lem:PL-w}. Then
\begin{equation}\label{sub-eq:asym-w-xi2}
w_{\xi_2}(s)-1=
\begin{cases}
O(s^{-\min\{\alpha,\frac{\beta}{2}\}})&\text{if }\alpha\neq\frac{\beta}{2},\\
O(s^{-\alpha}\ln s)&\text{if }\alpha=\frac{\beta}{2},
\end{cases}
\end{equation}
as $s\to+\infty$.
\end{lemma}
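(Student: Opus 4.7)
Set $v(s) := w_{\xi_2}(s) - 1$; by Lemma \ref{sub-lem:PL-w} this is a smooth decreasing function with $v(s) \ge w_0(s) - 1 = O(s^{-\beta/2})$. First I would check that $v(s) \to 0$. Since $w_{\xi_2}$ is decreasing and bounded below, the limit $L := \lim_{s\to\infty} w_{\xi_2}(s) \in [1,\xi_2]$ exists; if $L > 1$, then $h(s, L) \to h_\infty(L)$ where $f(h_\infty(L), a_2 L, \ldots, a_n L) = 1$, and strict monotonicity of $t \mapsto f(ta)$ (cf.\ Remark \ref{rk:A}) gives $h_\infty(L) < a_1 L$, so the ODE \eqref{sub-eq:ODE-w} would force $w_{\xi_2}'(s) \le -c/s$ for large $s$, contradicting $w_{\xi_2} > 0$.

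The next step is to extract the linear structure of \eqref{sub-eq:ODE-w} near $w = 1$. Writing $\tilde h_w(s, v) := \int_0^1 h_w(s, 1+tv)\,dt$, the fundamental theorem of calculus gives
\[
h(s, 1+v) - a_1 (1+v) = \bigl[h(s,1) - a_1\bigr] + \bigl[\tilde h_w(s, v) - a_1\bigr]\, v.
\]
Subtracting $f(a) = 1$ from the defining equation $f(h(s,1), a_2, \ldots, a_n) = 1 + C_0 s^{-\beta/2}$ and using the mean value theorem yields $h(s,1) - a_1 = C_0 s^{-\beta/2}/\partial_1 f(a)\,(1 + o(1))$. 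Continuity of $\nabla f$ combined with $h(s, 1+tv(s)) \to a_1$ gives, by \eqref{sub-eq:partial-w-h},
\[
\tilde h_w(s, v(s)) - a_1 \;\longrightarrow\; -\frac{a\cdot\nabla f(a)}{\partial_1 f(a)} = -2 a_n \alpha.
\]
Consequently the ODE rewrites as
\[
v'(s) + \frac{\alpha + \delta(s)}{s}\, v(s) = F(s),
\]
with $F(s) = [h(s,1) - a_1]/(2 a_n s) = O(s^{-1-\beta/2})$ and $\delta(s) = O(s^{-\beta/2} + v(s)) \to 0$.

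The crux of the argument — and the main obstacle — is producing a polynomial decay $v(s) = O(s^{-\gamma_0})$ for some $\gamma_0 > 0$, fast enough that $\int_{s_0}^\infty \delta(t)/t\,dt$ is absolutely convergent; only then will the integrating factor $\mu(s) := \exp\!\left(\int_{s_0}^s (\alpha + \delta(t))/t\, dt\right)$ satisfy $\mu(s) = C_\infty s^\alpha (1 + o(1))$ with $C_\infty > 0$. I would obtain this rough bound by fixing $\varepsilon \in (0, \alpha)$ small, choosing $s_\varepsilon$ with $|\delta(s)| \le \varepsilon$ for $s \ge s_\varepsilon$, and estimating
\[
(s^{\alpha - \varepsilon} v)'(s) \le s^{\alpha - \varepsilon} F(s) = O(s^{\alpha - \varepsilon - 1 - \beta/2}).
\]
Integration gives $v(s) = O(s^{-\min(\alpha - \varepsilon, \beta/2)})$, which suffices for the bootstrap.

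With $\mu(s) \sim C_\infty s^\alpha$ now in hand, the integrating factor identity
\[
v(s) = \frac{\mu(s_0)\,\xi_2}{\mu(s)} + \frac{1}{\mu(s)} \int_{s_0}^s \mu(t)\, F(t)\, dt
\]
reduces the conclusion to evaluating $\int_{s_0}^s t^{\alpha - 1 - \beta/2}(1+o(1))\,dt$: in the three regimes $\alpha > \beta/2$, $\alpha < \beta/2$, $\alpha = \beta/2$ this integral grows like $s^{\alpha - \beta/2}$, stays bounded, or grows like $\ln s$, respectively. Dividing by $\mu(s) \sim s^\alpha$ yields the three rates $s^{-\beta/2}$, $s^{-\alpha}$, $s^{-\alpha}\ln s$ announced in the lemma, while the first term $\mu(s_0)\xi_2/\mu(s) = O(s^{-\alpha})$ is absorbed in all cases since $-\alpha \le -\min(\alpha, \beta/2)$.
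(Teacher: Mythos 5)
Your proof is correct, and it reaches the same three asymptotic rates by a genuinely different organization of the analysis. The paper linearizes the ODE \eqref{sub-eq:ODE-w} around the reference function $w_0$ rather than around the constant $1$: since $h(s,w_0)=a_1w_0$ exactly, the forcing term that you call $F(s)$ disappears from the right-hand side of the linearized equation and resurfaces only through $w_0'$ inside the Gronwall formula. This choice also hands the paper a cheaper rough bound: $w_{\xi_2}>w_0$ gives $h(s,w_{\xi_2})\leq h(s,w_0)=a_1w_0$, whence a one-line Gronwall estimate yields $w_{\xi_2}-w_0=O(s^{-a_1/(2a_n)})$, with no need for your preliminary contradiction argument showing $v\to0$ followed by the $\varepsilon$-rescaling bootstrap $(s^{\alpha-\varepsilon}v)'\leq s^{\alpha-\varepsilon}F(s)$. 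On the other hand, your treatment of the coefficient error is slightly more demanding: you claim $\delta(s)=O(s^{-\beta/2}+v(s))$, which amounts to a Lipschitz bound on $\partial h/\partial w$ near $a$, whereas the paper isolates the same quantity as $I(s)\leq M_{\partial_wh}(w_{\xi_2}-w_0)$ and only uses Dini continuity to get $\int I(t)/t\,dt<\infty$. Both are available here since $f$ is smooth, but the paper's condition is formally weaker. The final step — $\mu(s)\sim C_\infty s^\alpha$ (in your notation) or $F(s)=O(s^{-\alpha})$ (in the paper's), followed by integrating $t^{\alpha-1-\beta/2}$ and splitting into the three regimes — is essentially identical in both. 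One cosmetic slip: in your variation-of-constants formula the initial value should be $v(s_0)=\xi_2-1$, not $\xi_2$; this does not affect the conclusion since the resulting boundary term is $O(s^{-\alpha})$ in either case.
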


\begin{proof}
We first show
\begin{equation}\label{sub-eq:asym-w-1}
w_{\xi_2}(s)-w_0=O(s^{-\frac{a_1}{2a_n}})\quad\text{as } s\to+\infty.
\end{equation}

Since $w_{\xi_2}>w_0$ for $s\geq s_0$, we note that $h(s,w_{\xi_2})\leq h(s,w_0)=a_1w_0$, which yields
\begin{align}
\frac{dw_{\xi_2}}{ds}&=\frac{h(s,w_{\xi_2})-a_1w_0+a_1w_0-a_1w_{\xi_2}}{2a_ns}\label{sub-eq:pf-w-decom}\\
&\leq\frac{a_1w_0-a_1w_{\xi_2}}{2a_ns}.\notag
\end{align}
Thus, by Gronwall's inequality one obtains
\begin{align}
w_{\xi_2}(s)&\leq \left(e^{\int_{s_0}^s-\frac{a_1}{2a_nt}dt}\right)\left[\xi_2+\int_{s_0}^s\left(e^{\int_{s_0}^r\frac{a_1}{2a_nt}dt}\right)\frac{a_1w_0(r)}{2a_nr}dr\right]\notag\\
&=\xi_2s_0^{\frac{a_1}{2a_n}}s^{-\frac{a_1}{2a_n}}
+s^{-\frac{a_1}{2a_n}}\int_{s_0}^s\frac{a_1}{2a_n}r^{\frac{a_1}{2a_n}-1}w_0(r)\,dr\notag\\
&\leq\xi_2s_0^{\frac{a_1}{2a_n}}s^{-\frac{a_1}{2a_n}}+w_0(s)-s^{-\frac{a_1}{2a_n}}\int_{s_0}^sr^{\frac{a_1}{2a_n}}w_0'(r)\,dr.\notag
\end{align}
Via \eqref{sub-eq:dw0}, it is easy to see that \eqref{sub-eq:asym-w-1} holds.

Next let us improve asymptotic behavior \eqref{sub-eq:asym-w-1} to \eqref{sub-eq:asym-w-xi2}. Going back to \eqref{sub-eq:pf-w-decom}, we infer that
\begin{align*}
\frac{dw_{\xi_2}}{ds}
=&\frac{w_{\xi_2}-w_0}{2a_ns}\left[\frac{\partial h}{\partial w}(s,\theta w_{\xi_2}+(1-\theta)w_0)-a_1\right]\\
=&\frac{w_{\xi_2}-w_0}{2a_ns}\left[\frac{\partial h}{\partial w}(s,\theta w_{\xi_2}+(1-\theta)w_0)-\frac{\partial h}{\partial w}(s,w_0)+\frac{\partial h}{\partial w}(s,w_0)-a_1\right]\\
\leq&\frac{w_{\xi_2}-w_0}{2a_ns}\left[M_{\partial_wh}(w_{\xi_2}-w_0)+\left(\frac{\partial h}{\partial w}(s,w_0)-a_1\right)\right]\\
=:&\frac{w_{\xi_2}-w_0}{2a_ns}[I(s)+J(s)],
\end{align*}
where $M_{\partial_wh}$ is the modulus of continuity of $\frac{\partial h}{\partial w}$. Hence, using Gronwall's inequality again gives
\begin{align}
w_{\xi_2}(s)&\leq F(s)\left[\xi_2
-\int_{s_0}^s\frac{I(r)+J(r)}{2a_nr}F^{-1}(r)w_0(r)\,dr\right]\notag\\
&=\xi_2F(s)+F(s)
\left[F^{-1}(r)w_0(r)\Big|_{s_0}^s-\int_{s_0}^sF^{-1}(r)w_0'(r)\,dr\right]\notag\\
&\leq\xi_2F(s)+w_0(s)-F(s)\int_{s_0}^sF^{-1}(r)w_0'(r)\,dr\label{sub-eq:asym-w-2}
\end{align}
where $F(s)=e^{\int_{s_0}^s\frac{I(t)+J(t)}{2a_nt}dt}$.

 On the one hand, recalling \eqref{sub-eq:asym-w-1}, we see that
\begin{equation}\label{sub-eq:I-Mod}
I(s)\leq M_{\partial_wh}(Cs^{-\frac{a_1}{2a_n}})
\end{equation}
when $s$ is sufficiently large, where $C$ is some constant.
Since $\frac{\partial h}{\partial w}$ is Dini continuous, we have
\begin{equation*}%\label{sub-eq:Dini-convergence}
\int_{s}^{+\infty}\frac{M_{\partial_wh}(t^{-\frac{a_1}{2a_n}})}{2a_nt}dt=\frac{1}{a_1}\int_0^C\frac{M_{\partial_wh}(t)}{t}dt<+\infty
\end{equation*}
where $C=C(s)$ is a constant. Together with \eqref{sub-eq:I-Mod}, we deduce that
\begin{equation*}
\int_{s_0}^s\frac{I(t)}{2a_nt}\,dt<+\infty, \quad\text{as }s\to+\infty.
\end{equation*}
On the other hand, by virtue of \eqref{sub-eq:partial-w-h} and \eqref{sub-eq:dw0}, we infer that
$$J(s)=-\frac{a\cdot\nabla f(a_1w_0,a_2w_0,\cdots,a_nw_0)}{\frac{\partial f}{\partial\lambda_1}(a_1w_0,a_2w_0,\cdots,a_nw_0)}=-2a_n\alpha+O(s^{-\frac{\beta}{2}}), \quad s\to+\infty.$$
Thus, we get
\begin{equation}\label{sub-eq:asym-F}
F(s)=O(s^{-\alpha})\quad\text{as }s\to +\infty.
\end{equation}

Now, thanks to \eqref{sub-eq:asym-F} and \eqref{sub-eq:dw0}, it follows from \eqref{sub-eq:asym-w-2} that \eqref{sub-eq:asym-w-xi2} holds. This completes the proof.
\end{proof}

We are now ready to prove Proposition \ref{sub-pro:ode}.
\begin{proof}[Proof of Proposition \ref{sub-pro:ode}]
Let $w_{\xi_2}$ be given in Lemma \ref{sub-lem:PL-w}. Define
\begin{equation}\label{sub-eq:ode-u-xi}
u_{\xi_1,\xi_2}(s)=\xi_1+\int_{s_0}^sw_{\xi_2}(t)\,dt.
\end{equation}
Since $u_{\xi_1,\xi_2}'(s)=w_{\xi_2}(s)>w_0(s)$ for any $s>s_0$, via Lemma \ref{sub-lem:h} it is clear that $u_{\xi_1,\xi_2}$ is a smooth solution of problem \eqref{sub-eq:IODE}-\eqref{sub-eq:iode-ic}. Moreover, it follows from Lemma \ref{sub-lem:asym-w} that
\begin{equation*}
\begin{split}
u_{\xi_1,\xi_2}(s)&=\int_{s_0}^s(w_{\xi_2}(t)-1)\,dt+s-s_0+\xi_1\\
&=\int_{s_0}^{+\infty}(w_{\xi_2}(t)-1)\,dt+s-s_0+\xi_1-\int_{s}^{+\infty}(w_{\xi_2}(t)-1)\,dt\\
&=s+\xi_1+\mu(\xi_2)+
\begin{cases}
O(s^{1-\min\{\alpha,\frac{\beta}{2}\}})&\text{if }\alpha\neq\frac{\beta}{2},\\
O(s^{1-\alpha}\ln s)&\text{if }\alpha=\frac{\beta}{2},
\end{cases}
\end{split}
\end{equation*}
as $s\to+\infty$, where
\begin{equation}\label{sub-eq:def-mu}
\mu(s_0,\xi_2):=\int_{s_0}^{+\infty}(w_{\xi_2}(t)-1)\,dt-s_0<+\infty.
\end{equation}
%Since $\lim_{\xi_2\to+\infty}w_{\xi_2}=+\infty$, so does $\mu(\xi_2)$.
This completes the proof.
\end{proof}

Finally, to obtain a subsolution of \eqref{eq:pro-eq}, by combining Proposition \ref{sub-pro:ode} with Lemma \ref{sub-lem:lower-f}, we conclude the following.

\begin{corollary}\label{sub-pro:subsolution}
The function $u_{\xi_1,\xi_2}$ given by Proposition \ref{sub-pro:ode} is an admissible subsolution of equation \eqref{eq:pro-eq} in the domain $\{x\in\mathbb{R}^n:\frac12\sum_{i=1}^na_ix_i^2>s_0\}$.
\end{corollary}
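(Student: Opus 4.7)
The plan is to verify admissibility, $\lambda(D^2 u_{\xi_1,\xi_2})\in\Gamma$ at every point $x$ with $s=\tfrac12\sum_i a_i x_i^2>s_0$, and then deduce the subsolution inequality from Lemma \ref{sub-lem:lower-f}. Once admissibility is in hand, Lemma \ref{sub-lem:lower-f} together with \eqref{sub-eq:g} and the ODE \eqref{sub-eq:IODE} immediately yields
\begin{equation*}
f(\lambda(D^2 u_{\xi_1,\xi_2}))\geq f\bigl(a_1u'+2a_nsu'',a_2u',\ldots,a_nu'\bigr)=\bar g(s)\geq g(x),
\end{equation*}
where $u':=u_{\xi_1,\xi_2}'>1>0$ and $u'':=u_{\xi_1,\xi_2}''<0$ by Proposition \ref{sub-pro:ode}, and where the first-order ODE \eqref{sub-eq:ODE-w} rewrites as $a_1u'+2a_nsu''=h(s,u')$, so that Lemma \ref{sub-lem:h} supplies both the membership of the reference vector in $\Gamma$ and the equality $f(h(s,u'),a_2u',\ldots,a_nu')=\bar g(s)$.

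For the admissibility, I would reuse the representation from the proof of Lemma \ref{sub-lem:lower-f}: setting $V:=u''\sum_j a_j^2 x_j^2$ and $V_0:=2a_nsu''$, one has $V_0\leq V<0$ (from $\sum a_j^2x_j^2\leq 2a_n s$ and $u''<0$) and $\lambda_i(D^2u)=a_iu'+\theta_iV$ with $\theta_i\in[0,1]$, $\sum_i\theta_i=1$. Writing $b:=(a_1u',\ldots,a_nu')\in\Gamma^+\subset\Gamma$ and $\tau:=V/V_0\in(0,1]$, this furnishes the decomposition
\begin{equation*}
\lambda(D^2u)=\sum_{k=1}^n\theta_k(b+Ve_k)=(1-\tau)\,b+\tau\sum_{k=1}^n\theta_k(b+V_0e_k),
\end{equation*}
so by convexity of $\Gamma$ it is enough to verify $b+V_0e_k\in\Gamma$ for each $k$. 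The case $k=1$ is exactly the membership $(h(s,u'),a_2u',\ldots,a_nu')\in\Gamma$ from Lemma \ref{sub-lem:h}. For $k\geq 2$, the multiset of $b+V_0e_k$ is obtained from that of $b+V_0e_1$ by a Robin--Hood transfer exchanging $\{a_1u'+V_0,\,a_ku'\}$ for $\{a_1u',\,a_ku'+V_0\}$, so $b+V_0e_k\prec b+V_0e_1$ in the majorization order; by Hardy--Littlewood--P\'olya, $b+V_0e_k$ is a convex combination of permutations of $b+V_0e_1$, and the symmetry and convexity of $\Gamma$ then force $b+V_0e_k\in\Gamma$.

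The main obstacle is precisely this last step, the admissibility for $k\geq 2$: only the symmetry and convexity of $\Gamma$ (and not any pointwise monotonicity in a single coordinate) are structurally available, which is what forces the use of majorization rather than an elementary componentwise comparison with the vector from Lemma \ref{sub-lem:h}.
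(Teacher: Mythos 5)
Your proposal is correct, and the admissibility step is handled by a genuinely different argument than the paper's. The paper's proof proceeds by contradiction: assuming $\lambda(D^2u_{\xi_1,\xi_2}(x_0))\notin\Gamma$, it finds $\epsilon_0\geq 0$ with $\lambda(D^2u_{\xi_1,\xi_2}(x_0)+\epsilon_0 A)\in\partial\Gamma$ and $\lambda(D^2u_{\xi_1,\xi_2}(x_0)+\epsilon A)\in\Gamma$ for $\epsilon>\epsilon_0$, applies Lemma~\ref{sub-lem:lower-f} to the function $u_{\xi_1,\xi_2}+\epsilon s$ to deduce $f(\lambda(D^2u_{\xi_1,\xi_2}+\epsilon A))\geq\bar{g}(s)$ for every $\epsilon>\epsilon_0$, and then sends $\epsilon\to\epsilon_0^+$ to contradict \eqref{eq:boundary-f}. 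Your argument is direct: after writing
\[
\lambda(D^2u)=(1-\tau)\,b+\tau\sum_{k=1}^n\theta_k\bigl(b+V_0e_k\bigr),\qquad b=(a_1u',\ldots,a_nu')\in\Gamma^+,
\]
you obtain $b+V_0e_1=(h(s,u'),a_2u',\ldots,a_nu')\in\Gamma$ from Lemma~\ref{sub-lem:h} and the ODE \eqref{sub-eq:ODE-w}, and then get $b+V_0e_k\in\Gamma$ for $k\geq 2$ via the observation that $b+V_0e_k\prec b+V_0e_1$ (a single Robin--Hood transfer, valid since $a_1\leq a_k$ and $V_0<0$), Hardy--Littlewood--P\'olya/Birkhoff, and the symmetry plus convexity of $\Gamma$; the cone's convexity then closes the decomposition. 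Both routes are valid and both ultimately rest on Lemma~\ref{sub-lem:h} for the reference vector's membership in $\Gamma$; the paper's version is shorter and leans directly on the open-cone/boundary structure through \eqref{eq:boundary-f}, while yours isolates precisely which structural features of $\Gamma$ (symmetry, convexity, containing $\Gamma^+$) are doing the work and avoids the topological limiting step. The subsolution inequality you derive afterward from Lemma~\ref{sub-lem:lower-f}, \eqref{sub-eq:g} and \eqref{sub-eq:IODE} matches the paper exactly.
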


\begin{proof}
In view of Lemma \ref{sub-lem:lower-f} and formulas \eqref{sub-eq:g} and \eqref{sub-eq:IODE}, it suffices to show that $u_{\xi_1,\xi_2}$ is admissible, i.e. $\lambda(D^2u_{\xi_1,\xi_2})\in\Gamma$. If $\lambda(D^2u_{\xi_1,\xi_2}(x_0))\notin\Gamma$ for some $x_0\in\{x\in\mathbb{R}^n:\frac12\sum_{i=1}^na_ix_i^2>s_0\}$, then there exists $\epsilon_0\geq0$ such that
$$\lambda(D^2u_{\xi_1,\xi_2}(x_0)+\epsilon_0A)\in\partial\Gamma$$
and
$$\lambda(D^2u_{\xi_1,\xi_2}(x_0)+\epsilon A)\in\Gamma\quad\text{for any }\epsilon>\epsilon_0.$$
Note that for $s>s_0$,
$$(u_{\xi_1,\xi_2}+\epsilon s)'>0\quad\text{and}\quad(u_{\xi_1,\xi_2}+\epsilon s)''<0.$$
Hence, by applying Lemma \ref{sub-lem:lower-f} to the function $u_{\xi_1,\xi_2}+\epsilon s$ we obtain that, for any $\epsilon>\epsilon_0$,
\begin{equation*}
\begin{split}
&f(\lambda(D^2u_{\xi_1,\xi_2}+\epsilon A))\\=&\,f(\lambda(D^2(u_{\xi_1,\xi_2}+\epsilon s)))\\
\geq&\,f(a_1(u_{\xi_1,\xi_2}'+\epsilon)+2a_nsu_{\xi_1,\xi_2}'',a_2(u_{\xi_1,\xi_2}'+\epsilon),\cdots,a_n(u_{\xi_1,\xi_2}'+\epsilon))\\
\geq&\,f(a_1u_{\xi_1,\xi_2}'+2a_nsu_{\xi_1,\xi_2}'',a_2u_{\xi_1,\xi_2}',\cdots,a_nu_{\xi_1,\xi_2}')=\bar{g}(s).
\end{split}
\end{equation*}
This contradicts the condition \eqref{eq:boundary-f}.

Consequently, $u_{\xi_1,\xi_2}$ is admissible and by Lemma \ref{sub-lem:lower-f} it satisfies
\begin{align*}
f(\lambda(D^2u_{\xi_1,\xi_2}(x)))&\geq f\left(a_1u_{\xi_1,\xi_2}'+2a_nsu_{\xi_1,\xi_2}'',a_2u_{\xi_1,\xi_2}',\cdots,a_nu_{\xi_1,\xi_2}'\right)\\
&=\bar{g}(s)\geq g(x)
\end{align*}
for any $x\in\{x\in\mathbb{R}^n:\frac12\sum_{i=1}^na_ix_i^2>s_0\}$. The proof is completed.
\end{proof}

\section{Generalized symmetric supersolutions}\label{sec:sup}
This section is devoted to the construction of supersolutions of equation \eqref{eq:pro-eq} and contains two parts. We first seek out a family of generalized symmetric supersolutions with uniformly quadratic asymptotics at infinity, by adapting carefully the idea of seeking such subsolutions in Section \ref{sec:sub}; see Proposition \ref{sup-pro:ode} and Corollary \ref{sup-pro:supersolution} below. However, the supersolutions obtained in Corollary \ref{sup-pro:supersolution} only stand near infinity and might not work near $\partial D$. Therefore, we were further led to look for another a family of fine supersolutions of \eqref{eq:pro-eq} outside $D$ (see Proposition \ref{sup-pro:rad-ode}), with which one can splice the former to apply Perron's method to prove Theorem \ref{thm:main} in the next section.

We will assume throughout the section that $g\in C^0(\mathbb{R}^n\setminus D)$ satisfies \eqref{eq:g} and $0<\inf_{\mathbb{R}^n\setminus D}g<1$. Since if $\inf_{\mathbb{R}^n\setminus D}g\geq1$, quadratic polynomials $\frac12 x^TAx+b\cdot x+c$ with $A\in\mathscr{A}$ may serve as the supersolutions of \eqref{eq:pro-eq} outside $D$.

\subsection{Generalized symmetric supersolutions near infinity}\label{sec:sup-1}
In this subsection, we let $A\in\mathscr{A}$ be of form \eqref{sub-eq:diag-A} and let $u=u(s)\in C^2$ be a generalized symmetric function with respect to $A$, where $s=\frac12x^TAx=\frac12\sum_{i=1}^na_ix_i^2$, $x\in\mathbb{R}^n$. As in Section \ref{sec:sub}, we denote $a=\lambda(A)$.

Let us start by estimating $f(\lambda(D^2u))$ from above with the following inequality.
\begin{lemma}\label{sup-lem:D2u}
Assume $u'(s)>0$ and $u''(s)\geq0$. Then
\begin{equation*}%\label{sup-eq:D2u}
a_iu'(s)\leq\lambda_i(D^2u(x))\leq a_iu'(s)+\sum_{j=1}^na_j^2x_j^2u''(s),\quad\forall\,1\leq i\leq n.
\end{equation*}
\end{lemma}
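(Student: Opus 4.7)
The plan is to mirror the proof of Lemma \ref{sub-lem:D2u} exactly, adjusting only the choice of indices in Weyl's inequality to reflect the reversed sign of $u''$. The entire argument reduces to a correct identification of the eigenvalues of the two pieces in a natural splitting of $D^2u$, followed by two applications of Theorem \ref{sub-thm:Weyl}.

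First I would reuse the decomposition $D^2u = A_1 + A_2$ coming from \eqref{sub-eq:D2u}, where $A_1 = u'(s)\,A$ is the diagonal piece and $A_2$ is the symmetric rank--one matrix with entries $a_i a_j x_i x_j\, u''(s)$. Since $u'(s) > 0$ and $a_1 \leq a_2 \leq \cdots \leq a_n$, we have $\lambda(A_1) = u'(s)\, a$ listed in the correct ascending order. The matrix $A_2$ has at most one nonzero eigenvalue, equal to $\sum_{j=1}^n a_j^2 x_j^2\, u''(s)$; under the hypothesis $u''(s) \geq 0$ this eigenvalue is nonnegative, so in ascending order
\begin{equation*}
\lambda(A_2) = \Bigl(0,\, 0,\, \ldots,\, 0,\; \sum_{j=1}^n a_j^2 x_j^2\, u''(s)\Bigr).
\end{equation*}
This is precisely the opposite placement from the situation in Lemma \ref{sub-lem:D2u}, where the single nonzero eigenvalue sat in the first slot because $u'' \leq 0$.

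Next I would apply Weyl's inequalities from Theorem \ref{sub-thm:Weyl}. For the upper bound I choose $j = 0$ in \eqref{sub-eq:Wue}, which gives $\lambda_i(D^2u) \leq \lambda_i(A_1) + \lambda_n(A_2) = a_i u'(s) + \sum_{j=1}^n a_j^2 x_j^2\, u''(s)$. For the lower bound I choose $j = 1$ in \eqref{sub-eq:Wle}, which gives $\lambda_i(D^2u) \geq \lambda_i(A_1) + \lambda_1(A_2) = a_i u'(s)$. Combining the two yields the claimed two-sided estimate.

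I do not expect any real obstacle: the only thing to be careful about is the ascending ordering convention for $\lambda(A_2)$, which swaps the roles of $\lambda_1(A_2)$ and $\lambda_n(A_2)$ compared with the subsolution lemma and thus also swaps which of \eqref{sub-eq:Wue} and \eqref{sub-eq:Wle} delivers which side of the inequality. Everything else is a verbatim transcription of the earlier argument.
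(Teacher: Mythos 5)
Your proof is correct and follows exactly the paper's intended argument: the paper's own proof of Lemma \ref{sup-lem:D2u} simply refers back to Lemma \ref{sub-lem:D2u} and notes that $\lambda(A_2)$ changes to $(0,\ldots,0,\sum_{j}a_j^2x_j^2u'')$, which is precisely the observation you make before applying Weyl's inequalities with $j=0$ in \eqref{sub-eq:Wue} and $j=1$ in \eqref{sub-eq:Wle}.
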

\begin{proof}
The proof follows exactly as that of Lemma \ref{sub-lem:D2u}. Actually, the only difference is that $\lambda(A_2)$ there is now changed to $(0,\cdots,0,\sum_{j=1}^na_j^2x_j^2u'')$.
\end{proof}

\begin{lemma}\label{sup-lem:upper-f}
Let $\delta>0$ and let $f$ be as in \eqref{eq:pro-eq} with \eqref{eq:increase-f} and \eqref{eq:max-partial-f} holding. Assume that $u'>0$ and $u''\geq0$ in $(0,+\infty)$. If
\begin{equation*}
\lim_{s\to+\infty}u'(s)=1\quad\text{and}\quad\lim_{s\to+\infty}su''(s)=0,
\end{equation*}
then there exists $\bar{s}=\bar{s}(f,A,\delta,u',u'')>0$ such that for any $s>\bar{s}$,
$$f(\lambda(D^2u))\leq f(a_1u'+(2a_n+\delta)su'',a_2u',\cdots,a_nu').$$
\end{lemma}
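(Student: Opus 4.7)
The plan is to mirror the proof of Lemma~\ref{sub-lem:lower-f}, replacing its lower bound by the upper bound furnished by Lemma~\ref{sup-lem:D2u}. First, from Lemma~\ref{sup-lem:D2u} together with the trace identity $\operatorname{tr}(D^2 u) = u'\operatorname{tr}(A) + u'' U$, where $U := \sum_{j=1}^{n} a_j^2 x_j^2$, I can represent
$$\lambda_i(D^2 u(x)) = a_i u'(s) + \theta_i(x)\,U\,u''(s), \quad 1 \leq i \leq n,$$
with $\theta_i \in [0,1]$ and $\sum_i \theta_i = 1$. The key geometric fact is the elementary bound $U \leq 2 a_n s$, which follows from $a_j^2 x_j^2 \leq a_n \cdot a_j x_j^2$.

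I then set $\hat{a} := (a_1 u' + (2 a_n + \delta) s u'',\ a_2 u', \ldots, a_n u')$ and apply the mean value theorem. Using $\sum_i \theta_i = 1$, the difference rearranges to
$$f(\lambda(D^2 u)) - f(\hat{a}) = \partial_1 f(\tilde{a})\,[U - (2 a_n + \delta) s]\,u'' + U u''\sum_{i=2}^n \theta_i\,[\partial_i f(\tilde{a}) - \partial_1 f(\tilde{a})],$$
for some $\tilde{a}$ on the segment from $\hat{a}$ to $\lambda(D^2 u(x))$. The first summand is bounded above by $-\delta\,s\,u''\,\partial_1 f(\tilde{a}) \leq 0$ thanks to $U \leq 2 a_n s$ and \eqref{eq:increase-f}; the second summand is nonpositive by \eqref{eq:max-partial-f}. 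Summing the two yields the desired upper bound.

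The main obstacle, as is typical for these mean-value comparisons, will be to pick $\bar{s}$ so that (i) the entire segment stays in the cone $\Gamma$, and (ii) the ordering required to invoke \eqref{eq:max-partial-f} --- namely $\tilde{a}_1 \leq \tilde{a}_i$ for all $i \geq 2$ --- is preserved along it. The asymptotic hypotheses $u'(s) \to 1$ and $s u''(s) \to 0$ force both $\hat{a}$ and $\lambda(D^2 u)$ to converge uniformly to $(a_1, \ldots, a_n) \in \Gamma^+$ as $s \to \infty$, which settles (i), and (ii) as well whenever the $a_i$ are strictly increasing. In the degenerate case $a_1 = a_2 = \cdots = a_k$, the slot-$1$ perturbation $(2a_n + \delta) s u''$ may raise $\tilde{a}_1$ marginally above $\tilde{a}_i$ for $i \leq k$, but the symmetry of $f$ then makes $\partial_i f(\tilde{a}) - \partial_1 f(\tilde{a})$ an $o(1)$ quantity as $s \to \infty$, so its contribution is dominated by the strict $-\delta\,s\,u''\,\partial_1 f(\tilde{a})$ gain in the first summand once $\bar{s}$ is chosen large enough. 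This is precisely the role of the buffer $\delta > 0$ in the statement.
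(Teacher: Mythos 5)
Your proposal is correct and follows essentially the same approach as the paper's proof: the identical mean value theorem decomposition around $\hat{a}=\bar{a}_\delta$, the same elementary bound $U\le 2a_n s$ to extract the $-\delta s u''\,\partial_1 f$ gain from the leading term, and the same role of the buffer $\delta$ in absorbing the continuity error in $\nabla f$ as $\tilde{a}\to a$. The paper closes slightly more uniformly by applying \eqref{eq:max-partial-f} at the limit point $a$ (where $a_1$ is unambiguously the minimal coordinate) and then controlling $|\nabla f(\tilde{a}_\delta)-\nabla f(a)|<\epsilon$, which sidesteps your separate discussion of the degenerate case $a_1=\cdots=a_k$; but the substance of the two arguments is the same.
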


\begin{proof}
From Lemma \ref{sup-lem:D2u}, it is clear that $\lambda(D^2u)\in\Gamma$ and we can write it as in \eqref{sub-eq:lambda-theta}. Set
$$\bar{a}_{\delta}=(a_1u'+(2a_n+\delta)su'',a_2u',\cdots,a_nu').$$
 It is seen that $\bar{a}_{\delta}\in\Gamma$ as well. Then similar to \eqref{sub-eq:mv-lambda-a}, we have
\begin{equation}\label{sup-eq:mv-lambda-a-del}
f(\lambda(D^2u))-f(\bar{a}_{\delta})=[\theta_1U-(2a_n+\delta)s]u''\frac{\partial f}{\partial\lambda_1}(\tilde{a}_{\delta})+Uu''\sum_{i=2}^n\theta_i\frac{\partial f}{\partial\lambda_i}(\tilde{a}_{\delta}),
\end{equation}
where $\tilde{a}_{\delta}$ is a point lying in the segment between $\lambda(D^2u)$ and $\bar{a}_{\delta}$, and $U=\sum_{j=1}^na_j^2x_j^2$.

Since
$$Uu''=O(su'')\quad\text{as }s\to+\infty,$$
one obtains
$$\tilde{a}_{\delta}\to a\quad\text{and}\quad\nabla f(\tilde{a}_{\delta})\to\nabla f(a)\quad\text{as }s\to+\infty.$$
That is, for small $\epsilon>0$, there exists $\bar{s}$ such that
$$|\nabla f(\tilde{a}_{\delta})-\nabla f(a)|<\epsilon\quad\text{for }s\geq\bar{s}.$$
Thus, by \eqref{sup-eq:mv-lambda-a-del} and \eqref{eq:max-partial-f} we get
\begin{align*}
&f(\lambda(D^2u))-f(\bar{a}_{\delta})\\
\leq&\,[U-(2a_n+\delta)s]u''\frac{\partial f}{\partial\lambda_1}(a)+[(1-2\theta_1)U+(2a_n+\delta)s]u''\epsilon\\
\leq&-\delta su''\frac{\partial f}{\partial\lambda_1}(a)+(4a_n+\delta)su''\epsilon.
\end{align*}
Via \eqref{eq:increase-f}, the assertion follows by letting $0<\epsilon<\frac{\delta}{4a_n+\delta}\frac{\partial f}{\partial\lambda_1}(a)$.
\end{proof}

Now let $s_0$ in \eqref{sub-eq:g} further satisfy
\begin{equation}\label{sup-eq:g-s0}
\inf_{\mathbb{R}^n\setminus D}g\leq\underline{g}(s)=1-C_0s^{-\frac{\beta}{2}}\leq g(x)\quad\text{for }s\geq s_0.
\end{equation}
In order to find supersolutions of \eqref{eq:pro-eq}, Lemma \ref{sup-lem:upper-f} suggests us to study the following second-order ODE
\begin{equation}\label{sup-eq:iode}
f(a_1u'+(2a_n+\delta)su'',a_2u',\cdots,a_nu')=\underline{g},\quad s>s_0,
\end{equation}
with the initial data
\begin{equation}\label{sup-eq:iode-ic}
u(s_0)=\eta_1\quad\text{and}\quad u'(s_0)=\eta_2.
\end{equation}
 We will show the global existence of solutions to problem \eqref{sup-eq:iode}-\eqref{sup-eq:iode-ic} and determine their asymptotic behavior at infinity, in a way parallel to how we exploited Lemmas \ref{sub-lem:h}--\ref{sub-lem:asym-w} to solve problem \eqref{sub-eq:IODE}-\eqref{sub-eq:iode-ic} in Section \ref{sec:sub}.
,

First, as an analogue of $w_0$ determined by \eqref{sub-eq:f-w0-g}, let $W_0=W_0(s)$ be the positive increasing function defined on $[s_0,+\infty)$ such that
\begin{equation}\label{sup-eq:f-W0-g}
f(a_1W_0(s),\cdots,a_nW_0(s))=\underline{g}(s).
\end{equation}

\begin{lemma}\label{sup-lem:H}
Let $s\in [s_0,+\infty)$ and $w\in(0,W_0(s)]$. Assume that \eqref{eq:increase-f}, \eqref{eq:boundary-f} and \eqref{eq:sigma-f} hold. Then there exists a positive smooth function $H=H(s,w)$ such that
\begin{equation*}
f(H(s,w),a_2w,\cdots,a_nw)=\underline{g}(s).
\end{equation*}
Moreover, $H(s,w)$ is increasing with respect to $s$ and decreasing with respect to $w$.
\end{lemma}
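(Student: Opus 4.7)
The plan is to model the argument on the proof of Lemma \ref{sub-lem:h} and produce $H(s,w)$ by the intermediate value theorem applied to the one-variable function $F(H):=f(H,a_2w,\ldots,a_nw)$, which by \eqref{eq:increase-f} is strictly increasing in $H$ wherever it is defined.

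For the lower bound on $F$, I would set $H_-:=a_1w$. Since $0<w\leq W_0(s)$, monotonicity \eqref{eq:increase-f} together with the defining identity \eqref{sup-eq:f-W0-g} of $W_0$ gives
$$F(a_1w)=f(a_1w,a_2w,\ldots,a_nw)\leq f(a_1W_0(s),\ldots,a_nW_0(s))=\underline{g}(s).$$
For the upper bound, I would invoke \eqref{eq:sigma-f} applied to the permuted vector $\lambda:=(a_2w,a_3w,\ldots,a_nw,a_1w)\in\Gamma^+$: there exists $R$ such that $f(a_2w,\ldots,a_nw,a_1w+R)\geq 1$. Using the symmetry of $f$ to move the perturbed slot from the last position to the first then yields
$$F(a_1w+R)=f(a_1w+R,a_2w,\ldots,a_nw)\geq 1\geq\underline{g}(s),$$
since $\underline{g}(s)=1-C_0s^{-\beta/2}<1$.

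With these two bounds, continuity and strict monotonicity of $F$ produce a unique $H(s,w)$ (lying between $a_1w$ and $a_1w+R$) with $F(H(s,w))=\underline{g}(s)$; in particular $H(s,w)\geq a_1w>0$. Smoothness of $H$ jointly in $(s,w)$ follows from the implicit function theorem applied to $f(H,a_2w,\ldots,a_nw)=\underline{g}(s)$, since $\partial f/\partial\lambda_1>0$. Differentiating this identity in $s$ and in $w$ gives
$$\frac{\partial H}{\partial s}=\frac{\underline{g}'(s)}{\partial f/\partial\lambda_1}>0,\qquad \frac{\partial H}{\partial w}=-\frac{\sum_{i=2}^n a_i\,\partial f/\partial\lambda_i}{\partial f/\partial\lambda_1}<0,$$
because $\underline{g}'(s)=\tfrac{C_0\beta}{2}s^{-\beta/2-1}>0$ and the $a_i$ together with each $\partial f/\partial\lambda_i$ are positive.

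The only genuinely nontrivial step is the upper bound: one must recognize that \eqref{eq:sigma-f} perturbs the \emph{last} coordinate while here we need a large \emph{first} coordinate, and invoke symmetry of $f$ (which is what allows the preliminary permutation of the vector before applying \eqref{eq:sigma-f}). Every other step is a direct parallel with Lemma \ref{sub-lem:h}, with $\underline{g}$ replacing $\bar g$ and the reversed monotonicity in $s$ coming from the sign flip between $1-C_0s^{-\beta/2}$ and $1+C_0s^{-\beta/2}$.
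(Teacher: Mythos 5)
Your proposal is correct and follows essentially the same route the paper takes: intermediate value theorem in the first slot for existence, then the implicit function theorem for smoothness and the two monotonicity claims. The permutation-before-applying-\eqref{eq:sigma-f} step is exactly the right way to make the upper bound precise given that \eqref{eq:sigma-f} is stated for the last coordinate, and your sign computation for $\partial H/\partial s$ (where the sign flips relative to Lemma~\ref{sub-lem:h} because $\underline{g}'>0$ while $\bar g'<0$) is correct. The only small divergence is in the lower bound: you obtain $F(a_1w)\le\underline g(s)$ from \eqref{eq:increase-f} together with the restriction $w\le W_0(s)$ and the defining relation \eqref{sup-eq:f-W0-g}, whereas the paper (mirroring the proof of Lemma~\ref{sub-lem:h}) invokes \eqref{eq:boundary-f} to drive $f$ below $\inf g\le\underline g(s)$ as the first coordinate approaches $\partial\Gamma$; your variant is cleaner on the given domain $w\in(0,W_0(s)]$ and additionally pins down the lower bound $H\ge a_1w>0$ directly, while the paper's version does not use that restriction. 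Either route is fine.
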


\begin{proof}
Thanks to \eqref{eq:boundary-f} and \eqref{eq:sigma-f}, it is clear that $H(s,w)$ exists. Then the smoothness and monotonicity of $H(s,w)$ can be easily obtained via \eqref{eq:increase-f} and the implicit function theorem.
\end{proof}

With Lemma \ref{sup-lem:H}, we may reduce the solvability of problem \eqref{sup-eq:iode}-\eqref{sup-eq:iode-ic} to that of the following initial problem
\begin{equation}\label{sup-eq:ode-W}
\begin{cases}
\frac{dw}{ds}=\frac{H(s,w)-a_1w}{(2a_n+\delta)s},&s>s_0,\\
w(s_0)=\eta_2.
\end{cases}
\end{equation}
Indeed, through the analysis performed previously for problem \eqref{sub-eq:ODE-w}, we have
\begin{lemma}\label{sup-lem:PL-W}
Let $\delta>0$ and $0<\eta_2<W_0(s_0)$. Problem \eqref{sup-eq:ode-W} admits a unique smooth solution $W_{\eta_2,\delta}(s)$ defined on $[s_0,+\infty)$. Moreover, $W_{\eta_2,\delta}'>0$, $W_{\eta_2,\delta}<W_0$ and
\begin{equation*}
W_{\eta_2,\delta}(s)-1=
\begin{cases}
O(s^{-\min\{\alpha_\delta,\frac{\beta}{2}\}})&\text{if }\alpha_\delta\neq\frac{\beta}{2},\\
O(s^{-\alpha_\delta}\ln s)&\text{if }\alpha_\delta=\frac{\beta}{2},
\end{cases}
\end{equation*}
as $s\to+\infty$, where $\beta$ is as in \eqref{sub-eq:g} and
\begin{equation}\label{sup-eq:def-alpha-d}
\alpha_{\delta}=\frac{a\cdot\nabla f(a)}{(2a_n+\delta)\frac{\partial f}{\partial\lambda_1}(a)}.
\end{equation}
\end{lemma}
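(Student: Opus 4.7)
The plan is to mirror the three-step treatment of the subsolution ODE carried out in Lemmas \ref{sub-lem:PL-w} and \ref{sub-lem:asym-w}, with $W_0$ and $W_{\eta_2,\delta}$ playing the roles of $w_0$ and $w_{\xi_2}$ and with every relevant inequality reversed. Since $(H(s,w)-a_1 w)/((2a_n+\delta)s)$ is smooth on $(s_0,+\infty)\times (0,W_0(s))$ by Lemma \ref{sup-lem:H}, the Picard--Lindel\"of theorem produces a unique smooth local solution $W_{\eta_2,\delta}$. Because $H$ is decreasing in $w$, at any point where $W_{\eta_2,\delta}<W_0$ one has $H(s,W_{\eta_2,\delta})>H(s,W_0)=a_1 W_0>a_1 W_{\eta_2,\delta}$, so the ODE forces $W_{\eta_2,\delta}'>0$ along the orbit.

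For global extension I would use $W_0$ itself as a barrier, exactly as in the proof of Lemma \ref{sub-lem:PL-w}. If $W_{\eta_2,\delta}$ first touched $W_0$ at some $s_1\in(s_0,+\infty)$, the ODE would give $W_{\eta_2,\delta}'(s_1^-)=0$, whereas differentiating \eqref{sup-eq:f-W0-g} (the analogue of \eqref{sub-eq:dw0}, valid because $\underline{g}'>0$) yields $W_0'(s_1)>0$; this is incompatible with $W_{\eta_2,\delta}$ approaching $W_0$ from below. Hence $\eta_2<W_{\eta_2,\delta}(s)<W_0(s)$ on the whole maximal interval, and this two-sided bound precludes blow-up, delivering both global existence and the monotonicity statement.

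For the asymptotic rate, set $v(s):=W_0(s)-W_{\eta_2,\delta}(s)>0$. The monotonicity of $H$ in $w$ gives $H(s,W_{\eta_2,\delta})\geq H(s,W_0)=a_1 W_0$, which yields the linear inequality
$$v'\leq W_0'(s)-\frac{a_1}{(2a_n+\delta)s}v;$$
combined with the $O(s^{-\beta/2-1})$ decay of $W_0'$ obtained by differentiating \eqref{sup-eq:f-W0-g}, one Gronwall step produces the crude bound $v=O(s^{-a_1/(2a_n+\delta)})$. I would then sharpen this by repeating the Dini argument of Lemma \ref{sub-lem:asym-w}: rewriting $a_1-\partial_w H(s,w^*)$ via the mean value theorem and splitting it as
$$I(s):=\partial_w H(s,W_0)-\partial_w H(s,w^*),\qquad J(s):=a_1-\partial_w H(s,W_0),$$
the Dini continuity of $\partial_w H$ makes $\int^{\infty}I(s)/s\,ds$ finite, while the formula analogous to \eqref{sub-eq:partial-w-h} together with $W_0\to 1$ gives
$$J(s)=\frac{a\cdot\nabla f(aW_0)}{\frac{\partial f}{\partial\lambda_1}(aW_0)}\longrightarrow (2a_n+\delta)\alpha_\delta.$$
The resulting integrating factor is $O(s^{-\alpha_\delta})$, and plugging this into an integral representation analogous to \eqref{sub-eq:asym-w-2} extracts the dichotomy between $\alpha_\delta\neq\beta/2$ and $\alpha_\delta=\beta/2$; combining with $W_0-1=O(s^{-\beta/2})$ and $W_{\eta_2,\delta}-1=(W_0-1)-v$ then yields the stated rate. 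The main obstacle, as in the subsolution case, is precisely this sharpening: upgrading the crude $s^{-a_1/(2a_n+\delta)}$ bound to the sharp $s^{-\min\{\alpha_\delta,\beta/2\}}$ (or $s^{-\alpha_\delta}\ln s$) rate requires careful bookkeeping of the Dini remainder, with the borderline case $\alpha_\delta=\beta/2$ responsible for the logarithmic factor.
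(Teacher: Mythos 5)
Your proposal is correct and follows essentially the same route the paper indicates: the paper omits the proof of this lemma, remarking only that it can be carried out by mirroring Lemmas \ref{sub-lem:PL-w} and \ref{sub-lem:asym-w} with all relevant inequalities reversed, and that $W_0$ enjoys the same derivative asymptotics as $w_0$ in \eqref{sub-eq:dw0}. Your Picard--Lindel\"of step, the barrier argument precluding contact with $W_0$ (using $W_0'>0$ from differentiating \eqref{sup-eq:f-W0-g}), the sandwich $\eta_2<W_{\eta_2,\delta}<W_0$ for global existence, the crude Gronwall rate $O(s^{-a_1/(2a_n+\delta)})$, and the Dini-continuity sharpening with the split $I+J$ and the limit $J\to(2a_n+\delta)\alpha_\delta$ all transcribe the paper's subsolution argument faithfully to the supersolution setting.
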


\begin{proof}
First, as in the proof of Lemma \ref{sub-lem:PL-w}, the local existence and uniqueness of $W_{\eta_2,\delta}$ can be easily obtained by applying the Picard--Lindel\"of theorem. Then a contradiction argument shows that $W_{\eta_2,\delta}$ is strictly increasing without touching the function $W_0$ from below. The global existence of $W_{\eta_2,\delta}$ thus follows. Noting that $W_0$ also behaves as \eqref{sub-eq:dw0}, it is not difficult to derive the asymptotic property of $W_{\eta_2,\delta}$ as in the proof of Lemma \ref{sub-lem:asym-w}.

For the above reason, we omit the proof.
\end{proof}

Lemma \ref{sup-lem:PL-W} together with Lemma \ref{sup-lem:H} leads us to
\begin{proposition}\label{sup-pro:ode}
Let $f$ be as in \eqref{eq:pro-eq} and assume that \eqref{eq:increase-f}, \eqref{eq:boundary-f}, \eqref{eq:max-partial-f} and \eqref{eq:sigma-f} hold. Let $\delta>0$, $\eta_1\in\mathbb{R}$ and $\eta_2\in (0,W_0(s_0))$. Problem \eqref{sup-eq:iode}-\eqref{sup-eq:iode-ic} admits a smooth solution $U_{\eta_1,\eta_2,\delta}$ defined on $[s_0,+\infty)$, such that $U_{\eta_1,\eta_2,\delta}'>0$ and $U_{\eta_1,\eta_2,\delta}''>0$. Moreover, if $\alpha_\delta>1$, then
\begin{equation*}
U_{\eta_1,\eta_2,\delta}(s)=s+\eta_1+\bar\mu+
\begin{cases}
O(s^{1-\min\{\alpha_\delta,\frac{\beta}{2}\}})&\text{if }\alpha_\delta\neq\frac{\beta}{2},\\
O(s^{1-\alpha_\delta}\ln s)&\text{if }\alpha_\delta=\frac{\beta}{2},
\end{cases}
\end{equation*}
as $s\to+\infty$, where $\alpha_\delta$ is defined by \eqref{sup-eq:def-alpha-d}, $\beta>2$ is as in \eqref{sub-eq:g} and $\bar\mu$ depends on $\eta_2$ and $\delta$.
\end{proposition}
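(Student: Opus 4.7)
The plan is to mimic the construction in Section \ref{sec:sub} but now integrating the solution $W_{\eta_2,\delta}$ of the auxiliary first-order problem \eqref{sup-eq:ode-W} provided by Lemma \ref{sup-lem:PL-W}. Specifically, I would define
\[
U_{\eta_1,\eta_2,\delta}(s):=\eta_1+\int_{s_0}^{s}W_{\eta_2,\delta}(t)\,dt,\qquad s\geq s_0,
\]
which is smooth because $W_{\eta_2,\delta}$ is smooth. Clearly $U_{\eta_1,\eta_2,\delta}(s_0)=\eta_1$ and $U_{\eta_1,\eta_2,\delta}'(s_0)=W_{\eta_2,\delta}(s_0)=\eta_2$, so the initial conditions \eqref{sup-eq:iode-ic} are satisfied. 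Moreover $U_{\eta_1,\eta_2,\delta}'=W_{\eta_2,\delta}>0$ and, by Lemma \ref{sup-lem:PL-W}, $U_{\eta_1,\eta_2,\delta}''=W_{\eta_2,\delta}'>0$.

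Next I would verify that $U_{\eta_1,\eta_2,\delta}$ solves the implicit ODE \eqref{sup-eq:iode}. Since $W_{\eta_2,\delta}$ satisfies \eqref{sup-eq:ode-W}, one has
\[
a_1U_{\eta_1,\eta_2,\delta}'+(2a_n+\delta)sU_{\eta_1,\eta_2,\delta}''=a_1W_{\eta_2,\delta}+(2a_n+\delta)sW_{\eta_2,\delta}'=H(s,W_{\eta_2,\delta}(s)),
\]
and then Lemma \ref{sup-lem:H} gives
\[
f\bigl(a_1U_{\eta_1,\eta_2,\delta}'+(2a_n+\delta)sU_{\eta_1,\eta_2,\delta}'',\,a_2U_{\eta_1,\eta_2,\delta}',\cdots,a_nU_{\eta_1,\eta_2,\delta}'\bigr)=\underline{g}(s),
\]
which is exactly \eqref{sup-eq:iode}.

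For the asymptotic expansion, I would use the bound on $W_{\eta_2,\delta}-1$ in Lemma \ref{sup-lem:PL-W}. Since $\beta>2$, the assumption $\alpha_\delta>1$ guarantees $\min\{\alpha_\delta,\beta/2\}>1$, so the integral $\int_{s_0}^{+\infty}(W_{\eta_2,\delta}(t)-1)\,dt$ converges absolutely in both dichotomy cases. Hence I can define
\[
\bar\mu:=\int_{s_0}^{+\infty}\bigl(W_{\eta_2,\delta}(t)-1\bigr)\,dt-s_0,
\]
which depends only on $\eta_2$ and $\delta$ (and the fixed data $s_0,f,A$), and split
\[
U_{\eta_1,\eta_2,\delta}(s)=s+\eta_1+\bar\mu-\int_{s}^{+\infty}\bigl(W_{\eta_2,\delta}(t)-1\bigr)\,dt.
\]
The remainder term is then estimated directly from Lemma \ref{sup-lem:PL-W}: integrating the decay $O(t^{-\min\{\alpha_\delta,\beta/2\}})$ or $O(t^{-\alpha_\delta}\ln t)$ from $s$ to $+\infty$ produces exactly $O(s^{1-\min\{\alpha_\delta,\beta/2\}})$ in the non-resonant case and $O(s^{1-\alpha_\delta}\ln s)$ in the resonant case $\alpha_\delta=\beta/2$, which is the claimed expansion.

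The main (and essentially only) subtlety is the convergence of the tail integral, which is where the hypothesis $\alpha_\delta>1$ is used in an essential way; without it, $W_{\eta_2,\delta}-1$ is not integrable at infinity and no quadratic asymptotic for $U_{\eta_1,\eta_2,\delta}$ can be extracted. Everything else is a direct transcription of the arguments already developed for the subsolution in Section \ref{sec:sub}, with the roles of $w_0,h,w_{\xi_2}$ replaced by $W_0,H,W_{\eta_2,\delta}$ and the inequality directions reversed.
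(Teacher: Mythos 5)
Your proposal is correct and follows essentially the same route as the paper: define $U_{\eta_1,\eta_2,\delta}$ as the antiderivative of $W_{\eta_2,\delta}$, use Lemma \ref{sup-lem:H} to see the implicit ODE is satisfied since $0<W_{\eta_2,\delta}<W_0$, and then read off the asymptotics by splitting $\int_{s_0}^s(W_{\eta_2,\delta}-1)$ into the convergent improper integral $\bar\mu+s_0$ minus the tail $\int_s^{+\infty}(W_{\eta_2,\delta}-1)$, which converges because $\alpha_\delta>1$ and $\beta>2$. Your extra details (explicit check of the initial conditions and of the ODE, the integration of the remainder estimate) are just what the paper leaves tacit.
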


\begin{proof}
Let $W_{\eta_2,\delta}$ be given by Lemma \ref{sup-lem:PL-W}. Define
\begin{equation}\label{sup-eq:def-U-eta}
U_{\eta_1,\eta_2,\delta}(s)=\eta_1+\int_{s_0}^sW_{\eta_2,\delta}(t)\,dt.
\end{equation}
Since $0<U_{\eta_1,\eta_2,\delta}'(s)=W_{\eta_2,\delta}(s)<W_0(s)$ for any $s>s_0$, it follows from Lemma \ref{sup-lem:H} that $U_{\eta_1,\eta_2,\delta}$ is a smooth solution of problem \eqref{sup-eq:iode}-\eqref{sup-eq:iode-ic}. Moreover, when $\alpha_\delta>1$, we have
\begin{equation*}
\begin{split}
U_{\eta_1,\eta_2,\delta}(s)&=\eta_1+\int_{s_0}^s(W_{\eta_2,\delta}(t)-1)\,dt+s-s_0\\
&=s+\eta_1+\int_{s_0}^{+\infty}(W_{\eta_2,\delta}(t)-1)\,dt-s_0-\int_{s}^{+\infty}(W_{\eta_2,\delta}(t)-1)\,dt.
\end{split}
\end{equation*}
Thus, we complete the proof by setting
$$\bar\mu=\int_{s_0}^{+\infty}(W_{\eta_2,\delta}(t)-1)\,dt-s_0<+\infty.$$
\end{proof}

As an immediate consequence of combining Proposition \ref{sup-pro:ode} with Lemma \ref{sup-lem:upper-f}, we conclude the following.
\begin{corollary}\label{sup-pro:supersolution}
The function $U_{\eta_1,\eta_2,\delta}$ given by Proposition \ref{sup-pro:ode} is an admissible supersolution of equation \eqref{eq:pro-eq} in the domain $\{x\in\mathbb{R}^n:\frac12\sum_{i=1}^na_ix_i^2>\max\{\bar{s},s_0\}\}$. Here $\bar{s}$ is as in Lemma \ref{sup-lem:upper-f} and $s_0$ is as in \eqref{sup-eq:g-s0}.
\end{corollary}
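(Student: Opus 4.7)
The plan is to mirror the structure of the proof of Corollary \ref{sub-pro:subsolution}, exploiting the ODE \eqref{sup-eq:iode} together with the upper-bound Lemma \ref{sup-lem:upper-f}. In fact the supersolution case is cleaner than the subsolution case: since Proposition \ref{sup-pro:ode} delivers $U_{\eta_1,\eta_2,\delta}' > 0$ and $U_{\eta_1,\eta_2,\delta}'' > 0$ on $[s_0,+\infty)$, the lower bound in Lemma \ref{sup-lem:D2u} forces
$$\lambda_i(D^2 U_{\eta_1,\eta_2,\delta}(x)) \geq a_i U_{\eta_1,\eta_2,\delta}'(s) > 0,\quad 1\le i\le n,$$
so that $\lambda(D^2 U_{\eta_1,\eta_2,\delta}) \in \Gamma^+ \subset \Gamma$. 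Thus admissibility follows automatically, with no need for the perturbation-by-$\epsilon A$ contradiction argument used in Corollary \ref{sub-pro:subsolution}.

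With admissibility secured, the remaining task is to establish the pointwise inequality $f(\lambda(D^2 U_{\eta_1,\eta_2,\delta})) \leq g$ on the stated set. The natural route is to apply Lemma \ref{sup-lem:upper-f} to $u = U_{\eta_1,\eta_2,\delta}$. First I would check its hypotheses: positivity of $U_{\eta_1,\eta_2,\delta}'$ and $U_{\eta_1,\eta_2,\delta}''$ is already in hand; the asymptotic conditions $U_{\eta_1,\eta_2,\delta}'(s) \to 1$ and $s U_{\eta_1,\eta_2,\delta}''(s) \to 0$ as $s \to +\infty$ follow from Lemma \ref{sup-lem:PL-W}, since $W_{\eta_2,\delta} \to 1$, and from rewriting \eqref{sup-eq:ode-W} as
$$s W_{\eta_2,\delta}'(s) = \frac{H(s, W_{\eta_2,\delta}(s)) - a_1 W_{\eta_2,\delta}(s)}{2a_n + \delta},$$
whose right-hand side tends to $0$ because $W_{\eta_2,\delta}(s) \to 1$, $\underline g(s) \to 1$, and $f(a) = 1$ forces $H(s, W_{\eta_2,\delta}(s)) \to a_1$ by the defining relation in Lemma \ref{sup-lem:H}.

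Once Lemma \ref{sup-lem:upper-f} is applicable, for every $x$ with $s := \tfrac12\sum_{i=1}^n a_i x_i^2 > \max\{\bar s, s_0\}$ I would chain the inequalities
$$f(\lambda(D^2 U_{\eta_1,\eta_2,\delta}(x))) \leq f\bigl(a_1 U_{\eta_1,\eta_2,\delta}' + (2a_n+\delta)s U_{\eta_1,\eta_2,\delta}'',\, a_2 U_{\eta_1,\eta_2,\delta}',\, \ldots,\, a_n U_{\eta_1,\eta_2,\delta}'\bigr) = \underline g(s) \leq g(x),$$
where the equality invokes the ODE \eqref{sup-eq:iode} that $U_{\eta_1,\eta_2,\delta}$ solves, and the last inequality is \eqref{sup-eq:g-s0}. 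This completes the proof.

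Concerning the hard part: there is no serious obstacle, since everything has been set up in advance; the only step requiring a brief explicit verification is $s U_{\eta_1,\eta_2,\delta}''(s) \to 0$, which is needed to feed Lemma \ref{sup-lem:upper-f}, but this is immediate from the ODE once the convergences $W_{\eta_2,\delta} \to 1$ and $H(s,W_{\eta_2,\delta}) \to a_1$ are combined.
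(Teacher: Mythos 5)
Your proposal is correct and matches the route the paper has in mind: the paper states Corollary \ref{sup-pro:supersolution} as an immediate consequence of Proposition \ref{sup-pro:ode} and Lemma \ref{sup-lem:upper-f} without further detail, and what you write is exactly the unpacking of that claim. Your observation that admissibility is automatic here, because the lower bound $\lambda_i(D^2U_{\eta_1,\eta_2,\delta})\geq a_iU_{\eta_1,\eta_2,\delta}'>0$ from Lemma \ref{sup-lem:D2u} already puts $\lambda(D^2U_{\eta_1,\eta_2,\delta})$ in $\Gamma^+\subset\Gamma$, correctly explains why no analogue of the $\epsilon A$-perturbation argument from Corollary \ref{sub-pro:subsolution} is needed. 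Your verification of the asymptotic hypotheses of Lemma \ref{sup-lem:upper-f} is the one step the paper leaves implicit, and it is sound: $W_{\eta_2,\delta}\to 1$ gives $U'\to 1$, and rewriting \eqref{sup-eq:ode-W} as $sW_{\eta_2,\delta}'=(H(s,W_{\eta_2,\delta})-a_1W_{\eta_2,\delta})/(2a_n+\delta)$ together with $H(s,W_{\eta_2,\delta})\to a_1$ (forced by $\underline g\to 1$, $W_{\eta_2,\delta}\to 1$, $f(a)=1$, and the strict monotonicity \eqref{eq:increase-f}) yields $sU''\to 0$.
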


\begin{remark}\label{sup-rk:bar-s}
Observe from Lemma \ref{sup-lem:upper-f} that $\bar{s}$ above depends on $\eta_2$ and $\delta$ but is independent of $\eta_1$. This fact will play an essential role in the proof of Theorem \ref{thm:main}.
\end{remark}

\subsection{Radial supersolutions outside $D$}\label{sec:sup-2}
 This subsection aims to find another a family of supersolutions of \eqref{eq:pro-eq} outside $D$ from certain radial functions, in order to serve the needs of proving Theorem \ref{thm:main} in Section \ref{sec:proof}. Without loss of generality, we assume that the unit ball of $\mathbb{R}^n$ is contained in $D$.

Let $\tilde{a}>0$ be such that
$$f(\tilde{a},\cdots,\tilde{a})=\inf_{\mathbb{R}^n\setminus D} g$$
and let $s=\frac12\tilde{a}|x|^2$, $x\in\mathbb{R}^n$. In Proposition \ref{sup-pro:rad-ode} below, we obtain an admissible function $v=v(s)$ solving the following problem
\begin{equation}\label{sup-eq:rad-ode}
\begin{cases}
f(\lambda(D^2v))=f(\tilde{a}v'+2\tilde{a}sv'',\tilde{a}v',\cdots,\tilde{a}v')=\inf_{\mathbb{R}^n\setminus D} g, & s>\frac12\tilde{a},\\
v(\frac12\tilde{a})=\zeta_1,\quad v'(\frac12\tilde{a})=\zeta_2.
\end{cases}
\end{equation}
Here $v':=\frac{dv}{ds}$ and $v'':=\frac{d^2v}{ds^2}$. Clearly, such $v$ is a supersolution of equation \eqref{eq:pro-eq} in $\mathbb{R}^n\setminus\overline{D}$.

\begin{proposition}\label{sup-pro:rad-ode}
Let $f$ be as in \eqref{eq:pro-eq} and assume that \eqref{eq:increase-f}, \eqref{eq:boundary-f} and \eqref{eq:sigma-f} hold. Let $\zeta_1\in\mathbb{R}$ and $\zeta_2>1$. Problem \eqref{sup-eq:rad-ode} admits an admissible solution $v_{\zeta_1,\zeta_2}$. Moreover, for any $s>\frac12\tilde{a}$, $v_{\zeta_1,\zeta_2}'(s)>1$ and $v_{\zeta_1,\zeta_2}''(s)<0$.
\end{proposition}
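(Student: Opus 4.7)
My plan is to mirror the strategy used for Proposition \ref{sub-pro:ode}, with the crucial simplification that since $A=\tilde{a}I$, Remark \ref{sub-rk:radial-f} tells us the eigenvalue formula \eqref{sub-eq:D2u-a} is an exact equality (no reliance on \eqref{eq:max-partial-f}). First I would reduce the second-order implicit ODE in \eqref{sup-eq:rad-ode} to a first-order one. Setting $w(s)=v'(s)$, the equation becomes
\[
f\bigl(\tilde{a}w+2\tilde{a}s\,w',\,\tilde{a}w,\dots,\tilde{a}w\bigr)=f(\tilde{a},\dots,\tilde{a}).
\]
Following the recipe of Lemmas \ref{sub-lem:h} and \ref{sup-lem:H}, I would invoke \eqref{eq:boundary-f} and \eqref{eq:sigma-f} together with \eqref{eq:increase-f} and the implicit function theorem to produce a smooth function $\tilde{h}=\tilde{h}(w)$, strictly decreasing in $w$, such that
\[
f(\tilde{h}(w),\tilde{a}w,\dots,\tilde{a}w)=f(\tilde{a},\dots,\tilde{a})\quad\text{and}\quad(\tilde{h}(w),\tilde{a}w,\dots,\tilde{a}w)\in\Gamma.
\]
In particular $\tilde{h}(1)=\tilde{a}$ and $\tilde{h}(w)<\tilde{a}$ for $w>1$ by \eqref{eq:increase-f}.

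Next, problem \eqref{sup-eq:rad-ode} reduces to the first-order initial value problem
\[
\frac{dw}{ds}=\frac{\tilde{h}(w)-\tilde{a}w}{2\tilde{a}s},\qquad s>\tfrac{\tilde{a}}{2},\qquad w\bigl(\tfrac{\tilde{a}}{2}\bigr)=\zeta_2>1.
\]
Picard--Lindel\"of delivers a unique smooth local solution $w_{\zeta_2}$; since $\tilde{h}(w)-\tilde{a}w<0$ on $\{w>1\}$, this solution is strictly decreasing as long as it remains above $1$. For global existence, the key observation is that the constant function $w\equiv 1$ solves the \emph{same} ODE (because $\tilde{h}(1)=\tilde{a}$), so by uniqueness $w_{\zeta_2}$ cannot touch the barrier $1$ in finite time; and being decreasing, it is bounded above by $\zeta_2$, ruling out blow-up. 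Hence $w_{\zeta_2}$ extends to all of $[\tilde{a}/2,+\infty)$ with $1<w_{\zeta_2}\leq\zeta_2$ and $w_{\zeta_2}'<0$.

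Finally, I set
\[
v_{\zeta_1,\zeta_2}(s):=\zeta_1+\int_{\tilde{a}/2}^{s}w_{\zeta_2}(t)\,dt,
\]
which meets the initial conditions and obeys $v_{\zeta_1,\zeta_2}'=w_{\zeta_2}>1$, $v_{\zeta_1,\zeta_2}''=w_{\zeta_2}'<0$ on $(\tilde{a}/2,+\infty)$. By \eqref{sub-eq:D2u-a}, the eigenvalues of $D^2 v_{\zeta_1,\zeta_2}$ are precisely $(\tilde{h}(w_{\zeta_2}),\tilde{a}w_{\zeta_2},\dots,\tilde{a}w_{\zeta_2})\in\Gamma$, so $v_{\zeta_1,\zeta_2}$ is admissible and the equation in \eqref{sup-eq:rad-ode} holds by construction. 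I expect the only delicate point to be the global existence step, namely ruling out $w_{\zeta_2}$ crossing the barrier $1$ from above; the uniqueness argument against the constant solution $w\equiv 1$ resolves this cleanly, and is in fact \emph{simpler} than its counterparts in Lemmas \ref{sub-lem:PL-w} and \ref{sup-lem:PL-W}, where the barrier functions $w_0$ and $W_0$ depend on $s$.
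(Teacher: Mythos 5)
Your proposal follows the same route as the paper: reduce \eqref{sup-eq:rad-ode} to a first-order ODE for $w=v'$ via the implicitly defined function $\bar h$ (your $\tilde h$), invoke Picard--Lindel\"of for local existence, show the trajectory stays above the barrier $1$ to extend globally, and then integrate to get $v_{\zeta_1,\zeta_2}$, whose admissibility is immediate since $A=\tilde a I$ makes \eqref{sub-eq:D2u-a} exact. The one genuine refinement you add is in the barrier step: the paper refers back to the argument of Lemma \ref{sub-lem:PL-w}, but that contradiction there hinges on the barrier being strictly decreasing (it compares the left derivative of $w_{\xi_2}$ against $w_0'(s_1)<0$), which degenerates for the constant barrier $w\equiv 1$; your observation that $w\equiv 1$ is itself a solution of \eqref{sup-eq:ode-bar-w} (since $\tilde h(1)=\tilde a$), so the trajectory starting at $\zeta_2>1$ can never meet it by uniqueness, is the clean and fully rigorous fix for precisely this spot.
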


\begin{proof}
This result can be proved in the same way seeking a smooth solution of problem \eqref{sub-eq:IODE}-\eqref{sub-eq:iode-ic} as handled in Section \ref{sec:sub} (see Proposition \ref{sub-pro:ode}).

Indeed, as argued for Lemma \ref{sub-lem:h}, we first observe that for each $w>0$ there exists a function $\bar{h}=\bar{h}(w)$ such that
\begin{equation*}
(\bar{h}(w),\tilde{a}w,\cdots,\tilde{a}w)\in\Gamma
\end{equation*}
and
\begin{equation*}%\label{sub-eq:f-h-g}
f(\bar{h}(w),\tilde{a}w,\cdots,\tilde{a}w)=\inf_{\mathbb{R}^n\setminus D} g.
\end{equation*}
 In order to obtain a solution of \eqref{sup-eq:rad-ode}, this leads us to study
\begin{equation}\label{sup-eq:ode-bar-w}
\begin{cases}
\frac{dw}{ds}=\frac{\bar{h}(w)-\tilde{a}w}{2\tilde{a}s},& s>\frac12\tilde{a},\\
w(\frac12\tilde{a})=\zeta_2.
\end{cases}
\end{equation}
As argued for Lemma \ref{sub-lem:PL-w}, by applying the Picard--Lindel\"of theorem and the theorem of maximal interval of existence for the solution of the initial value problem of ODEs, we easily infer that problem \eqref{sup-eq:ode-bar-w} admits a unique smooth solution $\bar{w}_{\zeta_2}$ defined on $[\frac12\tilde{a},+\infty)$, such that for any $s>\frac12\tilde{a}$,
\begin{equation}\label{sup-eq:d-w-zeta}
\bar{w}_{\zeta_2}(s)>1\quad\text{and}\quad\bar{w}_{\zeta_2}'(s)<0.
\end{equation}
 In addition, $\bar{w}_{\zeta_2}$ is strictly increasing with respect to $\zeta_2$ such that
\begin{equation}\label{sup-eq:w-zeta-infty}
\lim_{\zeta_2\to+\infty}\bar{w}_{\zeta_2}=+\infty.
\end{equation}

 Let
\begin{equation}\label{sup-eq:def-v-zeta}
v_{\zeta_1,\zeta_2}(s)=\zeta_1+\int_{\frac12\tilde{a}}^{s}\bar{w}_{\zeta_2}(t)\,dt.
\end{equation}
Then $v_{\zeta_1,\zeta_2}$ is a smooth admissible solution of \eqref{sup-eq:rad-ode}.
\end{proof}

\section{Proof of Theorem \ref{thm:main}}\label{sec:proof}
In this section, we prove Theorem \ref{thm:main} by applying an adapted Perron's method (see Lemma \ref{pf-thm:Perron} below). The main ingredient of the proof is to demonstrate the existence of a viscosity subsolution $\underline{u}$ of \eqref{eq:pro-eq} with prescribed Dirichlet boundary value and asymptotic behavior at infinity, and also a viscosity supersolution $\bar{u}\geq\underline{u}$ but agreeing on $\underline{u}$ at infinity. As we will see later, such a subsolution could be obtained by splicing the generalized symmetric subsolution $u_{\xi_1,
\xi_2}$ in Corollary \ref{sub-pro:subsolution} and the supremum of barrier functions over the boundary points of $D$ (see Lemma \ref{pf-lem:w-xi} below). Analogously, such a supersolution is generally constructed by splicing the generalized symmetric supersolution $U_{\eta_1,
\eta_2,\delta}$ in Corollary \ref{sup-pro:supersolution} and the radial supersolution $v_{\zeta_1,\zeta_2}$ given by Proposition \ref{sup-pro:rad-ode}. However, the above demonstration is not straightforward. We need to adjust the initial data $\xi_i$, $\eta_i$ and $\zeta_i$ ($i=1,2$) delicately, not only to validate the splicing but also to ensure that the spliced subsolutions/supersolutions achieve the desired conditions on $\partial D$ and at infinity.

To begin with, we introduce several lemmas needed in the proof.
\begin{lemma}[Comparison principle]\label{pf-thm:comparison}
Let $f$ be as in \eqref{eq:pro-eq} with \eqref{eq:increase-f}, \eqref{eq:boundary-f} and \eqref{eq:nu-f} holding. Let $\Omega$ be a domain in $\mathbb{R}^n$ and $g\in C^0(\overline{\Omega})$ with $\inf_{\Omega} g>0$. Suppose that $u\in\mathrm{USC}(\overline{\Omega})\cap B_{p}(\Omega)$\footnote{$B_{p}(\Omega)$ denotes the set of functions that are bounded in $\Omega$ intersected with any ball of $\mathbb{R}^n$.} and $v\in\mathrm{LSC}(\overline{\Omega})\cap B_{p}(\Omega)$ are viscosity subsolution and viscosity supersolution of \eqref{eq:pro-eq} respectively. If $u\leq v$ on $\partial\Omega$ (and additionally
$$\lim_{|x|\to\infty}(u-v)(x)=0$$
provided $\Omega$ is unbounded), then $u\leq v$ in $\Omega$.
\end{lemma}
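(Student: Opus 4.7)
The plan is a standard Ishii doubling-of-variables argument, in which (\ref{eq:nu-f}) plays the role that the Aleksandrov maximum principle does when $g\equiv 1$. Suppose, for contradiction, that $\sigma := \sup_{\overline\Omega}(u-v) > 0$. In the unbounded case, the decay $\lim_{|x|\to\infty}(u-v)=0$ and the boundary inequality $u\leq v$ on $\partial\Omega$ localize this supremum to a compact subset of $\Omega$; intersecting $\Omega$ with a large ball $B_R$ reduces the problem (up to an arbitrarily small loss $\epsilon$ on the new outer boundary $\partial B_R\cap\Omega$, absorbed by sending $R\to\infty$ and $\epsilon\to 0$) to the bounded case, in which both $u$ and $v$ are bounded on $\overline\Omega$.

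The heart of the argument is to upgrade $u$ to a \emph{strict} viscosity subsolution via the multiplicative scaling dictated by (\ref{eq:nu-f}). For $\tau>0$ small, set $\tilde u_\tau := (1+\tau)u + C_\tau$ with $C_\tau := -\tau\sup_{\partial\Omega}u$, so that $\tilde u_\tau \leq u \leq v$ on $\partial\Omega$ while $\sup_{\Omega}(\tilde u_\tau - v) > 0$ persists (boundedness of $u$ and $\sigma>0$). Any admissible test $\psi\in C^2$ touching $\tilde u_\tau$ from above at $x_0$ yields an admissible $\psi_0 := (\psi - C_\tau)/(1+\tau)$ touching $u$ from above (admissibility is preserved since $\Gamma$ is a cone), hence $f(\mu) \geq g(x_0)$ for $\mu := \lambda(D^2\psi_0)$. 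Applying (\ref{eq:nu-f}) along the ray $t\mapsto (1+t)\mu$ gives
\[
\frac{d}{dt}f\bigl((1+t)\mu\bigr) \;=\; \sum_{i=1}^{n}\mu_i\,\frac{\partial f}{\partial\lambda_i}\bigl((1+t)\mu\bigr) \;\geq\; \frac{\nu\bigl(f((1+t)\mu)\bigr)}{1+t} \;\geq\; \frac{\nu(\inf_\Omega g)}{1+\tau},
\]
the last step using that $t\mapsto f((1+t)\mu)$ is nondecreasing (itself a consequence of (\ref{eq:nu-f})) and that $\nu$ is increasing. Integration over $t\in[0,\tau]$ yields
\[
f\bigl(\lambda(D^2\psi(x_0))\bigr) \;\geq\; g(x_0) + \eta_\tau, \qquad \eta_\tau := \frac{\tau\,\nu(\inf_\Omega g)}{1+\tau} > 0,
\]
so $\tilde u_\tau$ is a strict viscosity subsolution.

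With $\tilde u_\tau$ in hand, I apply Ishii's doubling-of-variables to $\Phi_\alpha(x,y) := \tilde u_\tau(x) - v(y) - \tfrac{\alpha}{2}|x-y|^2$ on $\overline\Omega\times\overline\Omega$. The strict inequality on $\partial\Omega$ forces any maximizer $(x_\alpha,y_\alpha)$ into the interior for $\alpha$ large, and the Crandall--Ishii lemma supplies symmetric matrices $X_\alpha\leq Y_\alpha$ in the closures of the respective second-order semi-jets. Weyl's inequalities give $\lambda_i(X_\alpha)\leq\lambda_i(Y_\alpha)$ for each $i$. The strict-subsolution inequality combined with (\ref{eq:boundary-f}) forces $\lambda(X_\alpha)\in\Gamma$ (otherwise, a sequence of admissible test Hessians approaching $X_\alpha$ would force $f\to \limsup_{\partial\Gamma}f < \inf_\Omega g$, contradicting the strict lower bound $g+\eta_\tau$); convexity and symmetry of $\Gamma$ together with $\Gamma^+\subset\Gamma$ then give $\lambda(Y_\alpha)\in\Gamma$ as well. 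Monotonicity (\ref{eq:increase-f}) and the two viscosity inequalities yield
\[
g(x_\alpha) + \eta_\tau \;\leq\; f(\lambda(X_\alpha)) \;\leq\; f(\lambda(Y_\alpha)) \;\leq\; g(y_\alpha),
\]
and since $x_\alpha,y_\alpha$ converge to a common point of $\Omega$ as $\alpha\to\infty$, continuity of $g$ forces $\eta_\tau\leq 0$, the required contradiction.

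The main obstacle is the strict-subsolution upgrade: an additive perturbation such as $u+\epsilon|x|^2/2$ would deliver only a gain of order $\epsilon\sum_i \partial_if$, not uniformly controllable from below, whereas the multiplicative scaling is precisely matched to the Euler-type bound (\ref{eq:nu-f}); this is exactly the structural reason one cannot simply reuse the $g\equiv 1$ comparison principle. A secondary subtlety, absorbed above, is that admissibility of the Crandall--Ishii matrix $X_\alpha$ is not automatic from Definition~\ref{def:visc} and must be extracted using (\ref{eq:boundary-f}).
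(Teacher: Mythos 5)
Your proof is essentially correct, and it is worth noting that the paper itself does not supply an argument here: Lemma~\ref{pf-thm:comparison} is proved by citation to \cite[Theorem A.3 and Corollary A.6]{Jiang-Li-Li-2021-b}. So you cannot ``match the paper's proof'' in a literal sense, but your strategy---upgrade $u$ to a strict subsolution by the multiplicative scaling $u\mapsto(1+\tau)u+C_\tau$, integrate the Euler-type inequality \eqref{eq:nu-f} along the ray $t\mapsto(1+t)\mu$ to produce a uniform gap $\eta_\tau>0$, then close with the Crandall--Ishii doubling lemma---is exactly the mechanism that \eqref{eq:nu-f} is designed to enable (this is why the paper's Remark~\ref{rk:remove-extra} says \eqref{eq:nu-f} is ``required in the comparison principle'' and can be dropped only when $g\equiv1$), and it is the standard route taken in the cited reference.

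Two small points where your write-up is compressed and should be expanded if this were to stand as a self-contained proof. First, after Ishii's lemma you obtain $X_\alpha\le Y_\alpha$ with $(p,X_\alpha)\in\bar J^{2,+}\tilde u_\tau(x_\alpha)$ only in the \emph{closure} of the semijet; the argument that $\lambda(X_\alpha)\in\Gamma$ should be run twice, once for points of $J^{2,+}$ (via the one-parameter family $X+tI$, $t\downarrow t_0$, which produces Hessians of genuine admissible test functions and forces $t_0=0$ by \eqref{eq:boundary-f} and the strict lower bound $\ge g+\eta_\tau$), and once more in the limit defining $\bar J^{2,+}$, again invoking \eqref{eq:boundary-f} to exclude $\lambda(X_\alpha)\in\partial\Gamma$. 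Your parenthetical sketch gestures at the right idea but conflates these two passages. Second, the deduction $\lambda(Y_\alpha)\in\Gamma$ uses that $\lambda_i(Y_\alpha)\ge\lambda_i(X_\alpha)$ componentwise and that $\Gamma$ is \emph{open}, convex, and contains $\Gamma^+$; symmetry of $\Gamma$ plays no role here, and the openness is essential (one gets only $\bar\Gamma$ otherwise). Neither issue is a gap in the sense of an irreparable error---both are routine to repair---but they are exactly the places where a reader would want the details spelled out, since admissibility of the Ishii matrices is the nonstandard part of running the comparison argument under the paper's cone-restricted viscosity definition.
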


\begin{proof}
See \cite[Theorem A.3 and Corollary A.6]{Jiang-Li-Li-2021-b}.
\end{proof}

Thanks to Lemma \ref{pf-thm:comparison}, Perron's method as in \cite{Ishii1992,Ishii1989} could be adapted to the following version for equation \eqref{eq:pro-eq}.
\begin{lemma}[Perron's method]\label{pf-thm:Perron}
Let $f$, $g$ and $\Omega$ be as in Lemma \ref{pf-thm:comparison}. Let $\varphi\in C^0(\partial\Omega)$. Suppose that there exist $\underline{u},\bar{u}\in C^0(\overline{\Omega})$ such that
$$f(\lambda(D^2\underline{u}))\geq g(x)\geq f(\lambda(D^2\bar{u}))\quad\text{in }\Omega$$
in the viscosity sense, $\underline{u}\leq\bar{u}$ in $\Omega$ and $\underline{u}=\varphi$ on $\partial \Omega$ (and additionally
$$\lim_{|x|\to\infty}(\underline{u}-\bar{u})(x)=0$$
provided $\Omega$ is unbounded). Then
\begin{align*}
u(x):=\sup\{&v(x)| v\in\mathrm{USC}(\Omega), f(\lambda(D^2v))\geq g(x)\text{ in }\Omega\text{ in the viscosity}\\
&\text{sense, }\underline{u}\leq v\leq\bar{u}\text{ in }\Omega, v=\varphi\text{ on }\partial\Omega\}
\end{align*}
is in $C^0(\overline{\Omega})$ and is a viscosity solution of the problem
\begin{equation*}
\begin{cases}
f(\lambda(D^2u))=g(x)&\text{in }\Omega,\\
u=\varphi & \text{on }\partial\Omega.
\end{cases}
\end{equation*}
\end{lemma}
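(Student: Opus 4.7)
The plan is to follow the classical Perron-method argument, in the spirit of Ishii, adapted to the present fully nonlinear operator by invoking the comparison principle of Lemma~\ref{pf-thm:comparison}. First I let $\mathcal{S}$ denote the family appearing inside the supremum defining $u$. Since $\underline{u}$ is USC, a viscosity subsolution, satisfies $\underline{u}\leq\bar{u}$, and agrees with $\varphi$ on $\partial\Omega$, we have $\underline{u}\in\mathcal{S}$, so $\mathcal{S}$ is non-empty and $u$ is well defined with $\underline{u}\leq u\leq\bar{u}$. I will then work with the upper and lower semicontinuous envelopes $u^*$ and $u_*$; continuity of $\underline{u}$ and $\bar{u}$ forces $\underline{u}\leq u_*\leq u\leq u^*\leq\bar{u}$ on $\Omega$, so in particular $u^*=u_*=\varphi$ on $\partial\Omega$ and $(u^*-u_*)(x)\to0$ as $|x|\to\infty$ in the unbounded case.

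Next I will verify that $u^*$ is a viscosity subsolution by the standard stability argument. Given an admissible $\psi\in C^2$ with a strict local maximum of $u^*-\psi$ at $x_0$ (replacing $\psi$ by $\psi+|x-x_0|^4$ if necessary to make the maximum strict), by the definition of $u^*$ I can extract points $y_k\to x_0$ and members $v_k\in\mathcal{S}$ with $v_k(y_k)\to u^*(x_0)$; then $v_k-\psi$ attains local maxima at points $x_k\to x_0$, and since each $v_k$ is a viscosity subsolution I get $f(\lambda(D^2\psi(x_k)))\geq g(x_k)$. Continuity of $f$ and $g$ then yields $f(\lambda(D^2\psi(x_0)))\geq g(x_0)$.

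The main obstacle is showing that $u_*$ is a viscosity supersolution, which I will prove by contradiction via the classical bump construction. Suppose $u_*-\psi$ has a strict local minimum at some $x_0\in\Omega$ with $f(\lambda(D^2\psi(x_0)))>g(x_0)$. I will first rule out $u_*(x_0)=\bar{u}(x_0)$: otherwise $\bar{u}-\psi$ also attains a local minimum at $x_0$, but $\bar{u}$ is a supersolution, which contradicts $f(\lambda(D^2\psi(x_0)))>g(x_0)$. Hence $u_*(x_0)<\bar{u}(x_0)$. By continuity, the strict inequality $f(\lambda(D^2\psi))>g$ persists on a small closed ball $\overline{B_r(x_0)}\subset\Omega$ on which also $u_*-\psi\geq\epsilon>0$ on $\partial B_r(x_0)$. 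I will then consider the perturbation $\psi_\delta(x):=\psi(x)+\delta-(\delta/r^2)|x-x_0|^2$ for $\delta>0$ chosen so small that $\psi_\delta$ is still admissible with $f(\lambda(D^2\psi_\delta))>g$ on $\overline{B_r(x_0)}$, that $\psi_\delta\leq\bar{u}$ there (using $u_*(x_0)<\bar{u}(x_0)$ and continuity), and that $\psi_\delta\leq u_*-\epsilon/2$ on $\partial B_r(x_0)$; meanwhile $\psi_\delta(x_0)=\psi(x_0)+\delta=u_*(x_0)+\delta$, so by definition of $u_*$ there are points arbitrarily close to $x_0$ at which $\psi_\delta>u$. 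The function $w$ equal to $\max(u,\psi_\delta)$ on $B_r(x_0)$ and to $u$ elsewhere then lies in $\mathcal{S}$ (the boundary matching on $\partial B_r(x_0)$ makes $w$ upper semicontinuous, and the viscosity subsolution property is preserved under maxima), yet $w>u$ somewhere, contradicting the maximality of $u$.

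Finally, $u^*$ is a viscosity subsolution and $u_*$ is a viscosity supersolution, both sandwiched between $\underline{u}$ and $\bar{u}$ (hence in $B_p(\Omega)$), agreeing with $\varphi$ on $\partial\Omega$, and (in the unbounded case) with $(u^*-u_*)(x)\to0$ as $|x|\to\infty$. The comparison principle Lemma~\ref{pf-thm:comparison} therefore gives $u^*\leq u_*$ in $\Omega$, which combined with the pointwise chain $u_*\leq u\leq u^*$ forces $u^*=u=u_*$ everywhere. Thus $u\in C^0(\overline{\Omega})$ and is both a viscosity subsolution and a viscosity supersolution of $f(\lambda(D^2u))=g$ in $\Omega$, completing the proof.
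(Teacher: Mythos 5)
The paper itself does not prove this lemma; it cites \cite[Theorem B.1]{Jiang-Li-Li-2021-b}, so your proposal is in effect a reconstruction of that argument. The interior part of your scheme --- stability of the subsolution property under suprema for $u^*$, the bump construction showing $u_*$ is a supersolution, and the final appeal to Lemma \ref{pf-thm:comparison} --- is the standard Ishii machinery and is essentially sound. (One routine technicality: $u=\sup\mathcal{S}$ need not itself be upper semicontinuous, so the glued function $w=\max(u,\psi_\delta)$ may fail to belong to $\mathcal{S}$ as written; the bump has to be run through $u^*$ or through a single nearby member of $\mathcal{S}$ in the usual way.)

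The genuine gap is at the boundary. From $\underline u\le u_*\le u\le u^*\le\bar u$ you conclude ``in particular $u^*=u_*=\varphi$ on $\partial\Omega$,'' but the hypotheses give only $\underline u=\varphi$ on $\partial\Omega$; the supersolution $\bar u$ is merely $\ge\varphi$ there, and in the actual application (Step 2 of the proof of Theorem \ref{thm:main}) one has $\bar u\ge\max_{\partial D}\varphi$, so $\bar u>\varphi$ on $\partial D$ in general. The sandwich therefore yields only $\varphi\le u_*\le u^*\le\bar u$ on $\partial\Omega$, and the estimate $\limsup_{x\to x_0}u(x)\le\varphi(x_0)$ for $x_0\in\partial\Omega$ is precisely what is \emph{not} established. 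Without it you can neither conclude $u\in C^0(\overline\Omega)$ with $u=\varphi$ on $\partial\Omega$ nor invoke Lemma \ref{pf-thm:comparison} for the pair $(u^*,u_*)$, since the comparison principle requires the ordering $u^*\le u_*$ on $\partial\Omega$. Closing this gap needs an upper barrier argument at each boundary point: for instance, condition \eqref{eq:boundary-f} together with $\Gamma\subset\Gamma_1$ forces every $v\in\mathcal{S}$ to be subharmonic, and one then dominates all of $\mathcal{S}$ near $\partial\Omega$ by a fixed harmonic (or explicitly constructed supersolution) barrier attaining $\varphi$ continuously; this step uses the regularity of $\partial\Omega$ and is the substantive content of the cited Theorem B.1 that your proposal omits.
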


\begin{proof}
See \cite[Theorem B.1]{Jiang-Li-Li-2021-b}.
\end{proof}

To process boundary behavior of the solution, we need the following existence result of barrier functions.
\begin{lemma}\label{pf-lem:w-xi}
Let $D$ be a bounded strictly convex domain of $\mathbb{R}^n$ ($n\geq3$) with $\partial D\in C^2$ and let $\varphi\in C^2(\partial D)$. Let $K>0$ and let $A$ be an invertible and symmetric matrix. There exists some constant $C$, depending only on $n,\|\varphi\|_{C^2(\partial D)},K$, the upper bound of $A$, the diameter and the convexity of $D$, and the $C^2$ norm of $\partial D$, such that for every $\xi\in\partial D$, there exists $\bar{x}(\xi)\in\mathbb{R}^n$ satisfying $|\bar{x}(\xi)|\leq C$ and $$\omega_\xi<\varphi\quad\text{on }\partial D\setminus\{\xi\},$$ where $$\omega_\xi(x)=\varphi(\xi)+\frac{K}{2}\left[(x-\bar{x}(\xi))^TA(x-\bar{x}(\xi))-(\xi-\bar{x}(\xi))^TA(\xi-\bar{x}(\xi))\right]$$
for $x\in\mathbb{R}^n$.
\end{lemma}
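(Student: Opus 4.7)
The plan is to choose $\bar{x}(\xi)$ so that the gradient of $\omega_\xi$ at $\xi$ agrees, up to a large positive multiple of the outward unit normal $\nu(\xi)$ to $\partial D$, with the gradient of a fixed $C^2$ extension of $\varphi$. Once this alignment is arranged, the strict convexity of $\partial D$ forces $\omega_\xi<\varphi$ away from $\xi$ via a routine Taylor expansion.

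First I would extend $\varphi$ to a function $\tilde\varphi\in C^2$ on a neighborhood of $\overline D$ with $\|\tilde\varphi\|_{C^2}\leq C_1\|\varphi\|_{C^2(\partial D)}$, which is possible because $\partial D\in C^2$. Since $\partial D$ is compact, $C^2$ and strictly convex, there exists $c_0>0$, depending only on the convexity and $C^2$ norm of $\partial D$, such that
$$\nu(\xi)\cdot(x-\xi)\leq -c_0|x-\xi|^2, \quad \forall\,\xi,x\in\partial D.$$
For each $\xi\in\partial D$, I would then set
$$\bar{x}(\xi)=\xi-\frac{1}{K}A^{-1}\bigl(\nabla\tilde\varphi(\xi)+\lambda\,\nu(\xi)\bigr),$$
with $\lambda>0$ to be chosen. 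This gives $\nabla\omega_\xi(\xi)=KA(\xi-\bar{x}(\xi))=\nabla\tilde\varphi(\xi)+\lambda\,\nu(\xi)$, and a direct expansion of the definition of $\omega_\xi$ yields, for $x\in\partial D$,
$$\omega_\xi(x)-\varphi(x)=\bigl[\varphi(\xi)-\varphi(x)+\nabla\tilde\varphi(\xi)\cdot(x-\xi)\bigr]+\lambda\,\nu(\xi)\cdot(x-\xi)+\frac{K}{2}(x-\xi)^T A(x-\xi).$$

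Since $[\xi,x]\subset\overline{D}$ by convexity of $D$, Taylor's theorem applied to $\tilde\varphi$ along the segment gives $\bigl|\varphi(\xi)-\varphi(x)+\nabla\tilde\varphi(\xi)\cdot(x-\xi)\bigr|\leq \tfrac12\|\tilde\varphi\|_{C^2}|x-\xi|^2$; the convexity inequality gives $\lambda\,\nu(\xi)\cdot(x-\xi)\leq -\lambda c_0|x-\xi|^2$; and clearly $\tfrac{K}{2}|(x-\xi)^T A(x-\xi)|\leq \tfrac{K\|A\|}{2}|x-\xi|^2$. Choosing
$$\lambda > \frac{1}{c_0}\!\left(\tfrac12\|\tilde\varphi\|_{C^2}+\tfrac{K\|A\|}{2}\right)$$
produces a strictly negative coefficient of $|x-\xi|^2$, so that $\omega_\xi(x)<\varphi(x)$ for every $x\in\partial D\setminus\{\xi\}$. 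The bound $|\bar{x}(\xi)|\leq C$ then follows from $|\xi|\leq\operatorname{diam}(D)$, $|\nu(\xi)|=1$, $\|\nabla\tilde\varphi\|_\infty\leq C\|\varphi\|_{C^2(\partial D)}$, and $\lambda$ being chosen as an explicit function of the listed parameters.

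The main obstacle is establishing the uniform quadratic convexity estimate $\nu(\xi)\cdot(x-\xi)\leq -c_0|x-\xi|^2$ globally on $\partial D\times\partial D$ with a single constant $c_0>0$: locally near each $\xi$ this is immediate from the positive definiteness of the second fundamental form, but upgrading to a uniform bound requires compactness of $\partial D$ together with its $C^2$ strict convexity. Once this estimate is secured, the remaining verification is a mechanical Taylor expansion, and the freedom in the parameter $\lambda$ absorbs the dependence on $n$, $\|\varphi\|_{C^2(\partial D)}$, $K$, the upper bound of $A$, and the geometry of $\partial D$ simultaneously.
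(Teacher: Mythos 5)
Your construction is correct and is essentially the argument the paper delegates to the cited sources (Caffarelli--Li, Lemma~5.1, and Bao--Li--Li, Lemma~3.1): choose $\bar{x}(\xi)$ so that the ``gradient'' $KA(\xi-\bar{x}(\xi))$ equals $\nabla\tilde\varphi(\xi)$ plus a large multiple $\lambda$ of the outward normal $\nu(\xi)$, and let the uniform quadratic strict-convexity estimate $\nu(\xi)\cdot(x-\xi)\le -c_0|x-\xi|^2$ (which does hold on a compact $C^2$ boundary with positive second fundamental form, by the local-plus-compactness argument you sketch) absorb the remaining $O(|x-\xi|^2)$ terms from Taylor and from $\tfrac{K}{2}(x-\xi)^TA(x-\xi)$. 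One small remark worth keeping in mind: with $\bar{x}(\xi)=\xi-\tfrac{1}{K}A^{-1}\bigl(\nabla\tilde\varphi(\xi)+\lambda\nu(\xi)\bigr)$ your bound on $|\bar{x}(\xi)|$ also involves $\|A^{-1}\|$, whereas the lemma lists only ``the upper bound of $A$''; this two-sided spectral dependence is genuinely unavoidable (letting $A$ degenerate while fixing $\|A\|$, $K$ and $\varphi$ forces $|\bar x(\xi)|\to\infty$), so the phrasing in the lemma should be read as a two-sided bound on $A$, which is harmless in the application since $A\in\mathscr{A}$ is a fixed positive definite matrix.
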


\begin{proof}
 See \cite[Lemma 5.1]{Caffarelli-Li-2003} or \cite[Lemma 3.1]{Bao-Li-Li-2014}.
\end{proof}

We are now in position to prove Theorem \ref{thm:main}.
\begin{proof}[Proof of Theorem \ref{thm:main}]
We first observe that, by an orthogonal transformation and by subtracting a linear function from $u$, it suffices to prove in the case that the matrix $A$ is of diagonal form \eqref{sub-eq:diag-A} and the vector $b$ is $0$; see for instance \cite[Lemma 3.3]{Li-Li-2018} for a specific demonstration. We next split the proof into three steps.

For convenience, denote
$$B_r=\{x\in\mathbb{R}^n:|x|<r\}\quad\text{and}\quad D_r=\{x\in\mathbb{R}^n:\frac{1}{2}x^TAx<r\}.$$
Without loss of generality, we assume $B_1\subset D\subset D_{s_0}$, where $s_0$ is as in \eqref{sub-eq:g} and \eqref{sup-eq:g-s0}.

\textbf{Step 1}. Construct a viscosity subsolution $\underline{u}$ of \eqref{eq:pro-eq} with $\underline{u}=\varphi$ on $\partial D$ and the asymptotics
\begin{equation}\label{pf-eq:quadratic}
\lim_{|x|\to\infty}\left|\underline{u}(x)-\left(\frac12x^TAx+c\right)\right|=0.
\end{equation}
 The idea is to find two viscosity subsolutions $\underline{w}$ and $u_{\xi_1(m),\xi_2(c)}$ of \eqref{eq:pro-eq}, which attain the boundary value and the asymptotics respectively, and then splice them together as in \eqref{pf-eq:def-sub} below.

Let $K>0$ be large enough such that the function $\omega_\xi$ given by Lemma \ref{pf-lem:w-xi} satisfies\footnote{This can be guaranteed by conditions \eqref{eq:increase-f} and \eqref{eq:nu-f}, referring to Remark \ref{rk:A}.}
$$f(\lambda(D^2\omega_\xi))=f(K\lambda(A))\geq\sup_{\mathbb{R}^n\setminus D}g.$$
That is, $\omega_\xi$ is a smooth subsolution of \eqref{eq:pro-eq}. For $x\in\mathbb{R}^n\setminus D$, let us set $$\underline{w}(x)=\max\{\omega_{\xi}(x):\xi\in\partial D\}.$$
Then $\underline{w}$ is a viscosity subsolution of \eqref{eq:pro-eq} by \cite[Lemma 4.2]{Ishii1992} and $\underline{w}=\varphi$ on $\partial D$ by Lemma \ref{pf-lem:w-xi}.

For $\xi_1\in\mathbb{R}$ and $\xi_2>w_0(s_0)$, recall from Corollary \ref{sub-pro:subsolution} that Proposition \ref{sub-pro:ode} gives a smooth subsolution $u_{\xi_1,\xi_2}$ of \eqref{eq:pro-eq} in $\mathbb{R}^n\setminus\overline{D_{s_0}}$ which is defined by \eqref{sub-eq:ode-u-xi} and satisfies
$$\lim_{|x|\to\infty}\left|u_{\xi_1,\xi_2}(x)-\left(\frac12x^TAx+\xi_1+\mu(s_0,\xi_2)\right)\right|=0.$$
Here $\mu(s_0,\xi_2)$ is given by \eqref{sub-eq:def-mu}.
We claim there exists a constant $c_*$ such that for each $c>c_*$ one can choose proper $\xi_1$ and $\xi_2$ to fulfill
\begin{equation}\label{pf-eq:xi-c}
\xi_1+\mu(s_0,\xi_2)=c,
\end{equation}
which implies that $u_{\xi_1,\xi_2}$ attains \eqref{pf-eq:quadratic} this moment.
Simultaneously, in order to splice such $u_{\xi_1,\xi_2}$ with $\underline{w}$, we also require the choice of $\xi_1$ and $\xi_2$ to fulfill
\begin{equation}\label{pf-eq:splice-sub}
\max_{\partial D_{s_1}}u_{\xi_1,\xi_2}\leq\min_{\partial D_{s_1}}\underline{w}\quad\text{and}\quad\min_{\partial D_{s_2}}u_{\xi_1,\xi_2}\geq\max_{\partial D_{s_2}}\underline{w},
\end{equation}
where $s_1$ and $s_2$ are two fixed numbers such that $s_2>s_1>s_0$.

Indeed, recall from \eqref{sub-eq:ode-u-xi} that
$$u_{\xi_1,\xi_2}(x)=\xi_1+\int_{s_0}^{\frac12x^TAx}w_{\xi_2}(t)\,dt,\quad x\in\mathbb{R}^n\setminus D_{s_0},$$
where $w_{\xi_2}$ is given by Lemma \ref{sub-lem:PL-w}. Let $$m=\min\{\omega_{\xi}(x): \xi\in\partial D, x\in \overline{D_{s_1}}\setminus D\}.$$ Fixing $\xi_1=m-\int_{s_0}^{s_1}w_{\xi_2}(t)\,dt=:\xi_1(m)$ yields $$u_{\xi_1(m),\xi_2}(x)=m+\int_{s_1}^{\frac12x^TAx}w_{\xi_2}(t)\,dt.$$
Since $w_{\xi_2}>0$ for $x\in\mathbb{R}^n\setminus D_{s_0}$, $$u_{\xi_1(m),\xi_2}(x)\leq m\leq\min_{\partial D_{s_1}}\underline{w}$$
whenever $x\in\overline{D_{s_1}}\setminus D_{s_0}$ and $\xi_2>w_0(s_0)$. Namely, $u_{\xi_1(m),\xi_2}$ satisfies the first condition of \eqref{pf-eq:splice-sub}. Regarding the second one, it suffices to let $\xi_2$ be sufficiently large since $w_{\xi_2}$ is strictly increasing with respect to $\xi_2$ (see Lemma \ref{sub-lem:PL-w}). Thus, let us assume that $u_{\xi_1(m),\xi_2}$ satisfies \eqref{pf-eq:splice-sub} when $\xi_2>\bar{C}\geq w_0(s_0)$.

Now, by recalling \eqref{sub-eq:def-mu} we notice that
$$\xi_1(m)+\mu(s_0,\xi_2)=m+\mu(s_1,\xi_2).$$
Via Lemma \ref{sub-lem:PL-w}, we infer that $\mu(s_1,\xi_2)$ is increasing with respect to $\xi_2$ and satisfies $\lim_{\xi_2\to+\infty}\mu(s_1,\xi_2)=+\infty$.
Thus, let $c_*$ be a constant such that
\begin{equation}\label{pf-eq:c-star-1}
c_*\geq m+\mu(s_1,\bar{C}).
\end{equation}
Then for each $c>c_*$, there exists a unique $\xi_2(c)>\bar{C}$ such that
$$\xi_1(m)+\mu(s_0,\xi_2(c))=c,$$
illustrating that $u_{\xi_1(m),\xi_2(c)}$ achieves \eqref{pf-eq:xi-c}.

For $c>c_*$, we define
\begin{equation}\label{pf-eq:def-sub}
\underline{u}(x)=
\begin{cases}
\underline{w}(x),\quad &x\in D_{s_1}\setminus D,\\
\max\{\underline{w}(x),u_{\xi_1(m),\xi_2(c)}(x)\},\quad &x\in D_{s_2}\setminus D_{s_1},\\
u_{\xi_1(m),\xi_2(c)}(x),&x\in\mathbb{R}^n\setminus D_{s_2}.
\end{cases}
\end{equation}
From Definition \ref{def:visc} and \cite[Lemma 4.2]{Ishii1992}, we deduce that $\underline{u}$ is a viscosity subsolution of \eqref{eq:pro-eq} satisfying \eqref{pf-eq:quadratic} and $\underline{u}=\underline{w}=\varphi$ on $\partial D$.

\textbf{Step 2}. Construct a viscosity supersolution $\bar{u}$ of \eqref{eq:pro-eq} to satisfy
\begin{equation*}%\label{pf-eq:step-super}
\underline{u}\leq\bar{u}\text{ in }\mathbb{R}^n\setminus D\quad\text{and}\quad\lim_{|x|\to\infty}(\bar{u}-\underline{u})(x)=0.
\end{equation*}

If $\inf_{\mathbb{R}^n\setminus D}g=1$, then we can take $\bar{u}=\frac12x^TAx+c$ directly as the desired supersolution, provided that $c>c^*$ and $c^*$ is selected suitably; see for instance the argument in \cite{Jiang-Li-Li-2021} (or \cite{Bao-Li-Li-2014,Li-Li-2018,Li-Bao-2014,Li2019}).

While $\inf_{\mathbb{R}^n\setminus D}g<1$, the construction of $\bar{u}$ would not be straightforward since $\frac12x^TAx+c$ fails to be a supersolution of \eqref{eq:pro-eq} in this case. We will deal with that in a way analogous to the construction of $\underline{u}$. More precisely, we are going to find two supersolutions of \eqref{eq:pro-eq}, one of which coincides with $\underline{u}$ at infinity and the other of which surpasses $\underline{u}$ on $\partial D$, and then splice them together as in \eqref{pf-eq:def-super} below.

Let $\delta>0$ be such that $\alpha_\delta>1$ (see \eqref{sup-eq:def-alpha-d}) and let $\eta_2\in (0,W_0(s_0))$ (see \eqref{sup-eq:f-W0-g}). For $\eta_1\in\mathbb{R}$, we first recall from Corollary \ref{sup-pro:supersolution} that Proposition \ref{sup-pro:ode} gives a smooth supersolution $U_{\eta_1,\eta_2,\delta}$ of \eqref{eq:pro-eq} in $\mathbb{R}^n\setminus\overline{D_{\hat{s}}}$ which is defined by \eqref{sup-eq:def-U-eta} and satisfies
\begin{equation}\label{pf-eq:U-asym}
\lim_{|x|\to\infty}\left|U_{\eta_1,\eta_2,\delta}(x)-\left(\frac12x^TAx+\eta_1+\bar\mu\right)\right|=0.
\end{equation}
Here $\hat{s}:=\max\{\bar{s},s_0\}$ and $\bar\mu$ are both independent of $\eta_1$, which implies they are already fixed.

For $\zeta_1\in\mathbb{R}$ and $\zeta_2>1$, we proceed by recalling that Proposition \ref{sup-pro:rad-ode} establishes another smooth supersolution $v_{\zeta_1,\zeta_2}$ of \eqref{eq:pro-eq} in $\mathbb{R}^n\setminus\overline{B_{1}}$ which is defined by \eqref{sup-eq:def-v-zeta}, that is, $$v_{\zeta_1,\zeta_2}(x)=\zeta_1+\int_{\frac12\tilde{a}}^{\frac12\tilde{a}|x|^2}\bar{w}_{\zeta_2}(t)\,dt,$$
where $\bar{w}_{\zeta_2}$ is strictly increasing with respect to $\zeta_2$ and satisfies \eqref{sup-eq:d-w-zeta} and \eqref{sup-eq:w-zeta-infty}.

To obtain a supersolution of \eqref{eq:pro-eq} in $\mathbb{R}^n\setminus\overline{D}$, we splice $U_{\eta_1,\eta_2,\delta}$ and $v_{\zeta_1,\zeta_2}$ together by choosing suitable $\zeta_1$ and $\zeta_2$ such that
\begin{equation}\label{pf-eq:splice-sup}
\max_{\partial B_{r_1}}v_{\zeta_1,\zeta_2}\leq\min_{\partial B_{r_1}}U_{\eta_1,\eta_2,\delta}\quad\text{and}\quad\min_{\partial B_{r_2}}v_{\zeta_1,\zeta_2}\geq\max_{\partial B_{r_2}}U_{\eta_1,\eta_2,\delta},
\end{equation}
where $r_1$ and $r_2$ are two fixed numbers such that $D_{\hat{s}}\subset B_{r_1}\subset B_{r_2}$. Indeed, let
\begin{equation*}
M(\eta_1)=\min\{U_{\eta_1,\eta_2,\delta}(x):x\in\overline{B_{r_1}}\setminus D_{\hat{s}}\}
\end{equation*}
and fix $$\zeta_1=M(\eta_1)-\int_{\frac12\tilde{a}}^{\frac12\tilde{a}r_1^2}\bar{w}_{\zeta_2}(t)\,dt=:\bar{\zeta_1}.$$ Then
\begin{equation}\label{pf-eq:sup-v-zeta}
v_{\bar{\zeta_1},\zeta_2}(x)=M(\eta_1)+\int_{\frac12\tilde{a}r_1^2}^{\frac12\tilde{a}|x|^2}\bar{w}_{\zeta_2}(t)\,dt.
\end{equation}
Clearly, $v_{\bar{\zeta_1},\zeta_2}$ satisfies the first condition in \eqref{pf-eq:splice-sup} whenever $\zeta_2>1$. To tackle the second one, by recalling \eqref{sup-eq:def-U-eta} we observe that
\begin{equation*}
\max_{\partial B_{r_2}}U_{\eta_1,\eta_2,\delta}=\eta_1+\max_{\partial B_{r_2}}\int_{s_0}^{\frac12x^TAx}W_{\eta_2,\delta}(t)\,dt
\end{equation*}
 and
\begin{align*}
\min_{\partial B_{r_2}}v_{\bar{\zeta_1},\zeta_2}
=&M(\eta_1)+\min_{\partial B_{r_2}}\int_{\frac12\tilde{a}r_1^2}^{\frac12\tilde{a}|x|^2}\bar{w}_{\zeta_2}(t)\,dt\\
=&\eta_1+\min\left\{\int_{s_0}^{\frac12x^TAx}W_{\eta_2,\delta}(t)\,dt:x\in\overline{B_{r_1}}\setminus D_{\hat{s}}\right\}+\int_{\frac12\tilde{a}r_1^2}^{\frac12\tilde{a}r_2^2}\bar{w}_{\zeta_2}(t)\,dt.
\end{align*}
Hence, by the monotonicity of $\bar{w}_{\zeta_2}$ with respect to $\zeta_2$, one can infer that if $\zeta_2$ is sufficiently large (fix $\zeta_2=\bar{\zeta_2}$), then
$v_{\bar{\zeta_1},\bar{\zeta_2}}$ satisfies \eqref{pf-eq:splice-sup} whatever $\eta_1$ is.

Now, in addition to \eqref{pf-eq:c-star-1}, let the constant $c_*$ further satisfy
\begin{equation}\label{pf-eq:c-star-2}
c_*\geq \max_{\partial D}\varphi-\int_{\frac12\tilde{a}r_1^2}^{\frac12\tilde{a}}\bar{w}_{\bar{\zeta}_2}(t)\,dt+\bar{\mu}.
\end{equation}
For $c>c_*$, we set
\begin{equation}\label{pf-eq:eta-1-c}
\eta_1(c):=c-\bar{\mu}
\end{equation}
and define
\begin{equation}\label{pf-eq:def-super}
\bar{u}(x)=
\begin{cases}
v_{\bar{\zeta_1},\bar{\zeta_2}}(x),&x\in B_{r_1}\setminus D,\\
\min\{v_{\bar{\zeta_1},\bar{\zeta_2}}(x),U_{\eta_1(c),\eta_2,\delta}(x)\},&x\in B_{r_2}\setminus B_{r_1},\\
U_{\eta_1(c),\eta_2,\delta}(x),&x\in\mathbb{R}^n\setminus B_{r_2}.
\end{cases}
\end{equation}
It is seen that $\bar{u}$ is a viscosity supersolution of \eqref{eq:pro-eq}, and by  \eqref{pf-eq:U-asym} it holds that
\begin{equation}\label{pf-eq:sup-infinity}
\lim_{|x|\to\infty}\left|\bar{u}(x)-\left(\frac12x^TAx+c\right)\right|=\lim_{|x|\to\infty}\left|U_{\eta_1(c),\eta_2,\delta}(x)-\left(\frac12x^TAx+c\right)\right|=0.
\end{equation}
Moreover, in view of \eqref{pf-eq:sup-v-zeta} and \eqref{sup-eq:def-U-eta}, we find that
\begin{equation*}%\label{pf-eq:sup-boundary}
\bar{u}=v_{\bar{\zeta_1},\bar{\zeta_2}}\geq M(\eta_1(c))+\int_{\frac12\tilde{a}r_1^2}^{\frac12\tilde{a}}\bar{w}_{\bar{\zeta}_2}(t)\,dt\geq\eta_1(c)+\int_{\frac12\tilde{a}r_1^2}^{\frac12\tilde{a}}\bar{w}_{\bar{\zeta}_2}(t)\,dt\quad\text{on }\partial D.
\end{equation*}
Using \eqref{pf-eq:c-star-2} and \eqref{pf-eq:eta-1-c} gives
\begin{equation}\label{pf-eq:sup-boundary}
\bar{u}(x)\geq c_*-\bar{\mu}+\int_{\frac12\tilde{a}r_1^2}^{\frac12\tilde{a}}\bar{w}_{\bar{\zeta}_2}(t)\,dt\geq\max_{\partial D}\varphi,\quad x\in\partial D.
\end{equation}
In view of \eqref{pf-eq:sup-infinity} and \eqref{pf-eq:sup-boundary}, we apply Lemma \ref{pf-thm:comparison} to obtain
$$\underline{u}\leq\bar{u}\quad\text{in }\mathbb{R}^n\setminus D.$$

\textbf{Step 3}. Construct a viscosity solution $u$ to problem \eqref{eq:pro}.

With $\underline{u}$ and $\bar{u}$ above, we define
\begin{align*}
u(x)=&\sup\{v(x)|v\in\mathrm{USC}(\mathbb{R}^n\setminus\overline{D}), f(\lambda(D^2v))\geq g\text{ in }\mathbb{R}^n\setminus\overline{D}\text{ in the viscosity}\\
&\text{sense, }\underline{u}\leq v\leq\bar{u}\text{ in }\mathbb{R}^n\setminus\overline{D}\text{ and }v=\varphi\text{ on }\partial D\}.
\end{align*}
Thanks to \eqref{pf-eq:quadratic} and \eqref{pf-eq:sup-infinity},
$$\lim_{|x|\to\infty}\left|u(x)-\left(\frac12x^TAx+c\right)\right|=0.$$
Consequently, by Lemma \ref{pf-thm:Perron}, we conclude that $u\in C^0(\mathbb{R}^n\setminus D)$ is a viscosity solution to problem \eqref{eq:pro}.

Finally, the uniqueness of $u$ follows from Lemma \ref{pf-thm:comparison}. This completes the proof.
\end{proof}

\begin{remark}\label{pf-rk:proof}
One may note from the proof that the approach to constructing a desired viscosity supersolution of \eqref{eq:pro-eq} in \textbf{Step 2} is more subtle than that in \textbf{Step 1} for the desired subsolution. Specifically, the latter is to choose two proper parameters $\xi_1$ and $\xi_2$ to splice subsolutions $u_{\xi_1,\xi_2}$ and $\underline{w}$ as one viscosity subsolution with prescribed boundary data and asymptotic behavior at infinity, which is analogous to those presented in other related works, for instance, \cite{Bao-Li-Li-2014,Bao-Li-Zhang-2015,Caffarelli-Li-2003,Cao-Bao-2017,Jiang-Li-Li-2021,Jiang-Li-Li-2021-b,Li-Li-2018,Li2019}. By contrast, in the former we had to carefully adjust three parameters $\eta_1$, $\zeta_1$ and $\zeta_2$ from two different supersolutions $U_{\eta_1,\eta_2,\delta}$ (where $\eta_2$ and $\delta$ are preseted) and $v_{\zeta_1,\zeta_2}$, in order to splice them validly as one viscosity supersolution achieving the expected condition on $\partial D$ and that at infinity simultaneously. This differs from the above literatures where the supersolution is obtained in a direct way without splicing instead.

Indeed, when $f$ is of a special form \eqref{eq:k-sigma} or \eqref{eq:quotient}, thanks to an explicit formula for $\sigma_k$ acting on generalized symmetric functions \cite{Bao-Li-Li-2014}, the authors of \cite{Bao-Li-Zhang-2015,Cao-Bao-2017,Jiang-Li-Li-2021-b} could parallel obtain a family of explicit subsolutions and supersolutions in $\mathbb{R}^n\setminus\overline{D}$. However, when $f$ is of an abstract form, our seeking $u_{\xi_1,\xi_2}$ in Section \ref{sec:sub} and $U_{\eta_1,\eta_2,\delta}$ in Subsection \ref{sec:sup-1} are not strictly parallel since in the latter we introduced an extra parameter $\delta$ (compare Lemma \ref{sup-lem:upper-f} with Lemma \ref{sub-lem:lower-f}). More importantly, unlike $u_{\xi_1,\xi_2}$, the domain where $U_{\eta_1,\eta_2,\delta}$ becomes a supersolution varies with parameters $\eta_2$ and $\delta$ involved (see Remark \ref{sup-rk:bar-s}), implying it generally does not work near $\partial D$. Therefore, we were naturally led to find another supersolution $v_{\zeta_1,\zeta_2}$ with fine properties so that it would achieve the expected condition on $\partial D$ and also could validate the process of splicing $U_{\eta_1,\eta_2,\delta}$ somewhere near $\partial D$, as already described above. To the best of our knowledge, this is a new ingredient among the proofs of such exterior problems presented in the literatures mentioned above.
\end{remark}

\section*{Acknowledgements}
The authors would like to thank professor Jiguang Bao for helpful comments during the preparation of this work. The first author was supported by China Scholarship Council. Part of this work has been done while the first author was visiting the Department of Mathematics ``Federigo Enriques'' of Universit\`a degli Studi di Milano, which is acknowledged for the hospitality. The authors would also like to thank the referee for the careful reading and valuable comments on the original manuscript.

\end{document}